\documentclass[oneside,reqno,11pt]{amsart}

\NeedsTeXFormat{LaTeX2e}


\hsize=14.2truecm
\vsize=19.3truecm
\setlength{\textwidth}{15.5cm}
\setlength\oddsidemargin{0in}
\setlength\evensidemargin{0in}

\usepackage{amsthm}
\usepackage{amssymb}
\usepackage{url}
\usepackage{graphicx,xcolor,marginnote}

\pagestyle{myheadings} 

 \newtheorem{thrm}{Theorem}[]
 \newtheorem*{thrm*}{Theorem}
 \newtheorem{lemma}[thrm]{Lemma}
 \newtheorem*{lemma*}{Lemma}
 \newtheorem*{Vermutung*}{Vermutung}
 
 \newtheorem{prop}[thrm]{Proposition}
 \theoremstyle{definition}
 
\newtheorem*{defi*}{Definition}

  \theoremstyle{remark}

  \newtheorem{remark}[thrm]{Remark}
  \newtheorem*{remark*} {Remark}

\newcommand{\f}{\begin{align}}
\newcommand{\fo}{\begin{align*}}
\newcommand{\fe}{\end{align}}
\newcommand{\foe}{\end{align*}}
\newcommand{\map}[3] {#1:\, #2\, \longrightarrow \, #3}

\newcommand{\R}{\mathbb{R}}

\newcommand{\E}{\mathbb{E}}

\newcommand{\C}{\mathbb{C}}
\newcommand{\Tr}{\textup{Tr}}

\renewcommand{\a}{\alpha}
\renewcommand{\b}{\beta}
\newcommand{\g}{\gamma}
\renewcommand{\d}{\delta}
\newcommand{\e}{\eta}

\newcommand{\s}{\sigma}
\renewcommand{\l}{\lambda}

\renewcommand{\t}{\tau}

\renewcommand{\hat}{\widehat}

\renewcommand{\phi}{\varphi}
\renewcommand{\e}{\epsilon}
\renewcommand{\epsilon}{\varepsilon}

\newcommand{\lv}{\left\lvert}
\newcommand{\rv}{\right\rvert}

\newcommand{\lb}{\left(}
\newcommand{\rb}{\right)}
\newcommand{\lr}{\left[}
\newcommand{\rr}{\right]}

{\setlength\arraycolsep{2pt}

\newcommand{\lbb}{\big(}
\newcommand{\rbb}{\big)}

\renewcommand{\O}{\mathcal O}
\newcommand{\erfc}{\textup{erfc}}
\renewcommand{\ell}{\textup{ell}}
\newcommand{\tr}{\textup{Tr}^2}
\newcommand{\weak}{\operatorname{weak}}
\newcommand{\strong}{\operatorname{strong}}
\newcommand{\FT}{{\textup{FT}}}

%
%
%

\begin{document}
\bigskip

\title[]{Universality at weak and strong non-Hermiticity beyond the elliptic Ginibre ensemble}
\author[G. Akemann]{Gernot Akemann}
  \address{Faculty of Physics, Bielefeld University, P.O.Box 100131, D-33501 Bielefeld, Germany}
  \email{akemann@physik.uni-bielefeld.de}
  \author[M. Cikovic]{Milan Cikovic}
  \address{Faculty of Physics, Bielefeld University, P.O.Box 100131, D-33501 Bielefeld, Germany}
  \email{milan@physik.uni-bielefeld.de}
  \author[M. Venker]{Martin Venker}
  \address{Institut de Recherche en Math\'ematique et Physique, Universit\'e catholique de Louvain, Chemin du Cyclotron
  	2, Louvain-La-Neuve, Belgium}
  \email{martin.venker@uclouvain.be}

\keywords{}

\begin{abstract}
We consider non-Gaussian extensions of the elliptic Ginibre ensemble of 
complex non-Hermitian random matrices by fixing the trace $\Tr(XX^*)$ of the matrix $X$ with a hard or soft constraint.
These ensembles have correlated matrix entries and non-determinantal joint densities of the complex eigenvalues. 
We study global and local bulk statistics in these ensembles, in 
particular in the limit of weak non-Hermiticity introduced by Fyodorov, Khoruzhenko and Sommers. Here, the support of the limiting 
measure collapses to the real line.
This limit was motivated by physics applications and interpolates between the celebrated sine and Ginibre kernel.
Our results constitute a first proof of universality of the interpolating kernel.
Furthermore, in the limit of strong non-Hermiticity,
where the support of the limiting measure remains an ellipse,
we obtain local Ginibre statistics in the bulk of the spectrum. 
\end{abstract}

 \maketitle

\section{Introduction and Main Results}

Despite its almost equally long history, the investigation of random matrices without symmetry
constraints is less advanced than that of random matrices which are for instance 
real symmetric, 
complex Hermitian or unitary. More generally, one distinguishes in Random Matrix Theory
(RMT) between matrix eigenvalues living on a one-dimensional set in the
complex plane $\C$ and matrices with ``genuinely complex'' eigenvalues. 
This distinction is 
somewhat imprecisely called Hermitian and non-Hermitian RMT. 
In physics applications the disctinction between Hermitian and non-Hermitian operators also plays an important role, where the 
latter are encountered e.g.~in quantum systems that are open or have a non-vanishing chemical potential. 
We refer to \cite{Handbook} for a list of modern applications in Hermitian and non-Hermitian physics problems.
This article focuses on the global and the local statistics in such non-Hermitian RMT where 
a transition between real and complex eigenvalues can be observed.

It is known that similar to Hermitian RMT, 
in the limit of large matrix size the
local statistics in non-Hermitian RMT fall
into different universality classes, depending on whether
the entries of the matrices are real, complex or quaternion. 
For example, the limiting local
correlations of eigenvalues in the bulk of the spectrum of an $N\times N$ random Gaussian matrix
with complex entries (the so-called Ginibre ensemble, a precise definition will be given below)
are given by those of the Ginibre point process. This point process is determinantal with the
so-called Ginibre kernel (see \eqref{kernel_strong} below) and may be seen as counterpart of the corresponding sine process
which describes the limiting local bulk statistics of random Hermitian matrices with complex
entries. 
Both
limiting point processes are highly universal
in the sense that they are limits of large classes
of different random matrix models.
The limit regime leading to the Ginibre kernel will be called limit of strong
non-Hermiticity, where we follow \cite{FKS}. The precise meaning will be explained below.

It appears not to be well known - at least in the mathematical community - that there
is another important limit regime in non-Hermitian RMT that leads to a one-parameter
family of limiting point processes \textit{interpolating} between the Ginibre and 
sine point process. This limit was found by Fyodorov, Khoruzhenko and Sommers \cite{FKS} and was coined weak non-Hermiticity limit. 
It occurs for random matrices that are almost Hermitian.
Surprisingly, it is precisely in this limit where a map between RMT and the corresponding effective field theory of the underlying 
physics problem can be made, for example in 
superconductors with columnar defects \cite{Efetov}
or in quantum chromodynamics with chemical potential \cite{SplittorffVerbaarschot}.
We refer to \cite{FS} for a review on this subject and further references to applications.
In \cite{AP14} a list of weakly non-Hermitian limiting kernels is given in different symmetry classes. These
correspond to local bulk, soft, or hard edge interpolating point processes that are all conjectured to be universal.

However, with
the exception of the soft edge limit of complex matrices \cite{Ben10}, there are no fully rigorous results 
proving the existence of these local 
weak non-Hermiticity
limits, let alone their possible universality (cf. Remark \ref{remark_weak} for
Ledoux's work \cite{Ledoux} on the crossover between the global limits semicircle and circular law).
In this article, we give a
first proof that for random
matrices with complex entries, the bulk limit of weak non-Hermiticity yields a universal point
process. We show this universality for two classes of ensembles. 
In the first ensemble that we call 
fixed trace elliptic ensemble, the norm $\Tr JJ^*$ of the matrix $J$ is fixed by a hard constraint. 
In our second ensemble the constraint is enforced more smoothly by adding a trace squared term to the density of the elliptic 
Ginibre ensemble.

In Hermitian RMT,
ensembles with a fixed trace constraint belong to the classical random matrix ensembles, going back
to 
\cite{Rosenzweig}. 
In analogy to  statistical mechanics, the fixed trace and standard Gaussian ensembles 
correspond to the microcanonical 
and canonical ensemble, resepectively. Despite this interpretation, 
to date nothing is known about the asymptotics when imposing a
fixed trace constraint in the non-Hermitian
Ginibre ensemble, in any limit regime.
The non-Gaussian nature of this ensemble makes it an ideal testing ground for
universality questions, in particular for 
the the weak non-Hermiticity limit.

Let us now make the previous more precise. The Ginibre ensemble
\cite{Ginibre}, which can be considered as a standard Gaussian measure, is defined as the probability 
measure on $\C^{N\times N}$ with density proportional to $\exp[-N\Tr(JJ^*)]$. Equivalently, a random matrix from the Ginibre 
ensemble can be realized 
as $J:=J_1+iJ_2$, where $J_{1,2}$ are independent Hermitian matrices from the Gaussian Unitary Ensemble (GUE), i.e.~the matrix 
distribution on the space of $N\times N$ Hermitian matrices with density proportional to $\exp[-N\Tr(J_{1,2}^2)]$.
The matrix $J$ will (almost surely) not be unitarily diagonalizable, but a Schur decomposition can be used to obtain the joint 
density of its eigenvalues $z_1,\dots,z_N$ on $\C^N$ as proportional to
\begin{align}
	 \exp\lr{\frac{\beta}{2}\sum_{j\not=l}\log\lv z_j-z_l\rv-N\sum_{j=1}^N\lv z_j\rv^2}\rr, \ \ \mbox{for}\ \ 
\beta=2.\label{Coulomb}
\end{align}
The eigenvalues form a two-dimensional Coulomb gas which represents another, well-known physics application of non-Hermitian RMT. 
Here, the eigenvalues $z_j$ correspond to charged point particles, at inverse temperature $\beta$. Only at the specific temperature 
$\beta=2$ the point process is determinantal, 
and
the eigenvalue statistics can be efficiently analyzed, 
i.e. by the fact that its correlation functions are given as determinants of  
a kernel $K_N$, which in turn can be studied using orthogonal polynomials in  the complex plane. Doing so, it was found that in the 
large $N$ limit, the eigenvalues of $J$ fill the unit disc in the complex plane with uniform density (the so-called ``circular 
law''). Also the limiting local correlations between close eigenvalues 
in the bulk
could be computed 
(see \eqref{kernel_strong}) \cite{Ginibre}.

Another important and well-studied model is the elliptic (Ginibre) ensemble. It was introduced as an interpolation between 
Hermitian and non-Hermitian matrices  by taking 
$J:=\sqrt{1+\t}J_1+i\sqrt{1-\t}J_2$, where $J_{1,2}$ are again independent GUE matrices and $\t\in(-1,1)$ controlls the degree of 
(non-)Hermiticity. The Ginibre ensemble is recovered choosing $\t=0$ and the GUE is obtained by formally taking the limit $\t\to1$. The 
elliptic ensemble has the density
\begin{align}
  P_{N,\ell}(J):=\frac1{Z_{N,\ell}}\exp\left[-\frac N{1-\t^2}\Tr\left(JJ^*-\frac\t2(J^2+{J^*}^2)\right)\right]\label{elliptic}
 \end{align}
on $\C^{N\times N}$,
where  $Z_{N,\ell}$ is the normalizing constant. The reader will note the resemblence with the bivariate normal distribution. In 
this interpretation, $\t$ is the correlation coefficient between $\Re J_{j,l}$ and $\Re J_{l,j}$ and $-\t$ the one between $\Im 
J_{j,l}$ and 
$\Im J_{l,j}$, for $j\not=l$. 
 The eigenvalue distribution corresponding to \eqref{elliptic} is again determinantal, known in closed form (cf. 
\eqref{EV_density_epsilon}) and has been analyzed in great detail, cf. \cite[Chapter 18]{Handbook} and references therein. As 
$N\to\infty$, 
and $|\tau|<1$ is fixed, its eigenvalues spread  uniformly in the set 
\begin{align}
	E:=\left\{Z\,:\,\lb\frac{\Re Z}{1+\t}\rb^2+\lb\frac{\Im Z}{1-\t}\rb^2\leq1\right\},\label{ellipse}
\end{align}
a fact that is termed ``elliptic law''. 
Its limiting local correlations, however, coincide for $\t\in(-1,1)$ fixed with those of 
the Ginibre ensemble \cite{FKS}. 

In this article, we study fixed trace versions of the Ginibre and elliptic ensembles. Formally, the fixed trace Ginibre ensemble $P_N$ can be seen as 
\begin{align}
P_{N}(dJ)=\frac 1{Z_N}\d(NK_p-\Tr JJ^*)dJ \label{FT_formally}
\end{align}
with $\d$ denoting the so-called Dirac delta function, $Z_N$ the normalizing constant and $K_p>0$ being an arbitrary constant, at 
which the 
normalized norm
$\Tr JJ^*/N$ is fixed. A rigorous definition is as follows:
Define
\begin{align*}
\mathcal S_N:=\{J\in\C^{N\times N}\,:\,\Tr JJ^*=NK_p\}
\end{align*}
with $K_p>0$ arbitrary. $\mathcal{S}_N$ is the sphere of radius $NK_p$ in the vector space $\R^{2N^2}\hat{=} \, \C^{N\times N}$. It 
is a well-known fact that there is a unique probability measure $P_N$ on $\mathcal S_N$ that is invariant under the orthogonal group 
acting on the $2N^2$-dimensional sphere. We will call $P_N$ the \textit{fixed trace Ginibre ensemble}. In the viewpoint of 
statistical mechanics,
the fixed trace Ginibre ensemble corresponds to the microcanonical ensemble, whereas the Ginibre ensemble corresponds to the 
canonical ensemble.

More generally, we define the \textit{elliptic fixed trace ensemble} for $\t\in(-1,1)$ as the probability measure 
\begin{align}
P_{N,\FT}(dJ):=\frac1{Z_{N,\FT}}\exp\lr{\frac {\t N}{2(1-\t^2)}\Tr\left(J^2+{J^*}^2\right)}\rr P_N(dJ),\label{FT}
\end{align}
where $Z_{N,\FT}$ denotes the normalization constant. In the special case $\t=0$, we recover the fixed trace Ginibre ensemble 
$P_N$. 
Note that the density of $P_{N,\FT}$ w.r.t.~$P_N$ is proportional to the density of the elliptic ensemble 
\eqref{elliptic} on $\mathcal S_N$ w.r.t.~the Lebesgue measure, since on $\mathcal S_N$ the term $\Tr JJ^*$ is constant. 

Furthermore, we consider an interpolation between \eqref{elliptic} and \eqref{FT}. 
 This ensemble is given by the density on $\C^{N\times N}$ 
\begin{align}
P_{N,\tr}(J):=\frac1{Z_{N,\tr}}\exp\lr{-\frac N{1-\t^2}\Tr\left(JJ^*-\frac\t2(J^2+{J^*}^2)\right)-\g 
(\Tr JJ^*-NK_p)^2}\rr,\label{def_matrixensemble}
\end{align}
where $\g\geq0,K_p\in\R$ are fixed, $\t\in(-1,1)$ and $Z_{N,\tr}$ is the normalization constant. This model is of the form \eqref{FT}  with a Gaussian approximation of the delta function in \eqref{FT_formally} penalizing deviations of $\Tr JJ^*$ from the value $NK_p$. If the strength of 
the penalization $\g$ goes to infinity, we have $P_{N,\tr}(dJ)\to P_{N,\FT}(dJ)$ in distribution. We will call 
\eqref{def_matrixensemble} the \textit{trace-squared ensemble}.
$P_{N,\tr}$ rather puts a ``soft constraint'' on $\Tr JJ^*$, whereas $P_{N,\FT}$ is obtained using a ``hard constraint''. In 
contrary to the elliptic ensemble, $P_{N,\tr}$ for $\g>0$ and $P_{N,\FT}$ are non-Gaussian and non-determinantal (cf. Remark 
\ref{non-determinantal} below).

Let us briefly comment on  fixed trace ensembles in Hermitian RMT. They are classical random matrix ensembles
introduced in \cite{Rosenzweig} (as reported in \cite[Chapter 27]{Mehta}). While for Hermitian models the limiting global 
correlations have  been known  since \cite{Rosenzweig,Mehta} to be given by the semicircle distribution for all three symmetry 
classes, a rigorous proof of universality of the sine kernel for a fixed trace model of Hermitian matrices has only been presented 
much more recently in \cite{GoetzeGordin}, cf.~\cite{AV} for earlier heuristic arguments. 
Local statistics at the soft and hard edge have been analyzed for fixed trace $\b$-ensembles in \cite{ZLQ,CLZ,LZ}.
More general fixed 
trace models have been considered	
where the trace of a polynomial in the random matrix is fixed \cite{ACMV99}.
This corresponds to the microcanonical version of non-Gaussian generalizations of the classical Wigner-Dyson enesmbles. Here, on the 
global scale the 
non-universality of higher order connected correlation functions (cluster functions) was argued for using loop equations.
 
For non-Hermitian fixed trace models, almost nothing is known with the exception of a formula for the
spectral density of complex eigenvalues at finite $N$ in the fixed trace Ginibre ensemble \cite{DLC00}. In particular, even the 
limiting measure for this simplest possible fixed trace model is unknown, let alone the local correlations.

Furthermore, the models \eqref{FT} and \eqref{def_matrixensemble} provide an excellent testing ground for studying the so-called 
limit of weak non-Hermiticity. The Ginibre correlations arise in the  situation of \textit{strong non-Hermiticity}, meaning that the 
anti-Hermitian part of the random matrix is of the same order in $N$ as the Hermitian part. This results in a limiting global 
distribution (the weak limit of the empirical spectral distribution $N^{-1}\sum_{j=1}^N \d_{z_j}$, the $z_j$'s being the 
eigenvalues) that is supported on a two-dimensional subset of $\C$.

In contrast to the limit of strong non-Hermiticity, the limit of \textit{weak non-Hermitici\-ty} describes a situation where the limiting global distribution of the non-Hermitian random matrix is supported on the real line but the local correlations still extend to the complex plane. For the elliptic ensemble, this happens if the parameter $\t$ is chosen as $\t=1-\kappa/N$ for some $\kappa>0$ not depending on $N$ (see \eqref{ellipse}) and thus the random matrix is almost Hermitian.
The limit of weak non-Hermiticity was first discussed perturbatively  in \cite{FKS97a}. The limiting point process is determinantal 
and its kernel was derived in terms of Hermite polynomials in the complex plane in \cite{FKS97b}, see \cite{FKS} for details. The 
limit of weak non-Hermiticity allows to describe the transition between sine and Ginibre kernel. This makes its universality highly 
suggestive, but so far only heuristic arguments in favour of this conjecture exist. For matrix ensembles with independent entries, 
these arguments can be found in \cite{FKS}, whereas more general ensembles (see \eqref{general_density} below) were treated in 
\cite{Ake02}.

Let us now turn to the main results of this paper.
Global and local statistics are usually studied using correlation functions. For a probability density $P$ on $\C^N$ and $1\leq k\leq N$, the $k$-th correlation function is defined as
\begin{align}
	\rho^k(z_1,\dots,z_k):=\frac{N!}{(N-k)!}\int_{\C^{N-k}}P(z_1,\dots,z_N)dz_{k+1}\dots dz_N.\label{def_correlation_functions}
\end{align}
The correlation functions are multiples of the marginal densities. 
Let $\rho_{N,\tr}^{k}$ denote the $k$-th correlation function of the eigenvalue density corresponding to $P_{N,\tr}$. Although 
$P_{N,\FT}(dJ)$ is not absolutely continuous, its eigenvalue distribution is for $N\geq2$. This is due to the fact that the 
constraint $\Tr JJ^*=NK_p$ translates to $\sum_{j=1}^N\lv z_j\rv^2+\sum_{i<j}^N\lv T_{ij}\rv^2=NK_p$, where $z_1,\dots,z_N$ are  the 
eigenvalues 
collected in the diagonal matrix $Z$,
and the 
$T_{ij}$ are complex variables stemming from the Schur decomposition $J=U(Z+T)U^*$.
Thus the constraint is somewhat less 
restrictive on the spectral level. For $N=1$, $P_{N,\FT}$ coincides with its eigenvalue distribution and is a probability 
distribution on a dilation of the unit circle. We will throughout the paper tacitly assume $N\geq2$ when speaking of correlation 
functions. Let $\rho_{N,\FT}^k$ denote the $k$-th correlation function of the eigenvalue density corresponding to $P_{N,\FT}$. An 
integral representation of $\rho_{N,\FT}^k$ for $k<N$ will be derived in Lemma \ref{Lemma_FT}.
We are now ready to state our main results. For the sake of brevity, we consider both types of ensembles, \eqref{FT} and \eqref{def_matrixensemble}, simultaneously.  We start with the strongly non-Hermitian situation.

\begin{thrm}[Limit of strong non-Hermiticity]\label{theorem_strong}
	Let $\t\in(-1,1)$ be fixed. Let $\rho_N^k$ denote either $\rho_{N,\FT}^k$ or $\rho_{N,\tr}^k$. Then there are constants 
$c_{1},c_2,C>0$, depending only on $K_p,\t$ and in the case $\rho_N^k=\rho_{N,\tr}^k$ also on $\g$, such that with $E:=\{Z\,:\,c_1 
(\Re 
Z)^2+c_2(\Im Z)^2\leq 1\}$ the following holds:
	\begin{enumerate}
	\item For any $Z\in\C$, $Z\notin\partial E$, we have
	\begin{align*}
	\lim_{N\to\infty}\frac{1}{N}\rho_{N}^1(Z)=1_{E^\circ}(Z)\cdot\frac C\pi.
	\end{align*}
	\item For $k=1,2\dots$, $Z\in E^\circ$, $z_1,\dots,z_k\in\C$, as $N\to\infty$
\begin{align*}
 &\frac{1}{(CN)^k}\rho_{N}^{k}\lb Z+\frac{z_1}{\sqrt{CN}},\dots,Z+\frac{z_k}{\sqrt{CN}}\rb=\det\lb K_{\strong}(z_j,z_l)\rb_{j,l\leq k}+\O\lb\frac{1}{\sqrt{N}}\rb,
\end{align*}
where 
\begin{align}\label{kernel_strong}
K_{\strong}(z_j,z_l):=\frac{1}{\pi}\exp\lr{-\frac{\lv z_j\rv^2+\lv z_l\rv^2}2+z_j\overline{z_l}}\rr.
\end{align}
The $\O$ term is uniform for $Z$ from any compact subset of $E^\circ$ and $z_1,\dots,z_k$ from compacts of $\C$.
\end{enumerate}
\end{thrm}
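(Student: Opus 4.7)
The plan is to reduce the study of $\rho_{N,\FT}^{k}$ and $\rho_{N,\tr}^{k}$ to an average of the well-understood determinantal correlation functions of the elliptic Ginibre ensemble \eqref{elliptic}, plus a one-dimensional integration that can be handled by Laplace's method. The starting observation is that the only non-Gaussian contribution in either ensemble is a function of the single linear statistic $\Tr JJ^*$. For the trace-squared ensemble one uses the Hubbard--Stratonovich identity
\begin{align*}
\exp\lr{-\g(\Tr JJ^*-NK_p)^2}\rr=\frac{1}{\sqrt{4\pi\g}}\int_\R\exp\lr{-\frac{s^2}{4\g}+is(\Tr JJ^*-NK_p)}\rr ds,
\end{align*}
and for the fixed-trace ensemble the delta constraint has the analogous Fourier representation. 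The inserted factor $e^{is\Tr JJ^*}$ merely shifts the coefficient of $\Tr JJ^*$ in the elliptic density \eqref{elliptic}, producing (after trivial rescaling) an elliptic ensemble with an effective, complex-valued coupling $\t(s)$. Combining this with the integral representation of $\rho_{N,\FT}^k$ from Lemma \ref{Lemma_FT} in the hard-constraint case, $\rho_N^{k}$ can be written as an integral over $s\in\R$ of the product of (i) the determinantal $k$-point function of an elliptic ensemble with parameter $\t(s)$ at the prescribed arguments, and (ii) a factor $e^{N\Phi(s)}$ coming from the ratio of elliptic partition functions, where $\Phi$ is computed explicitly from a Gaussian matrix integral.

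The second step is a saddle-point analysis in $s$. The phase $\Phi$ has a unique critical point $s^\ast$, characterized by the condition that the expected value of $\Tr JJ^*/N$ under the effective elliptic ensemble equals $K_p$. This determines an effective real coupling $\t^\ast\in(-1,1)$ and, through the elliptic law for \eqref{elliptic}, the constants $c_1,c_2,C>0$ appearing in the statement. Standard Laplace asymptotics localize the $s$-integral in a window of width $N^{-1/2}$ around $s^\ast$, while the elliptic $k$-point function is slowly varying on this scale, so up to $\O(N^{-1/2})$ corrections the effective parameter can be replaced by its saddle value $\t^\ast$.

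With this reduction in hand, both assertions follow from the known asymptotics of the elliptic Ginibre ensemble at fixed $\t^\ast\in(-1,1)$. Part (a) is the elliptic law in the normalized coordinates defined by $c_1,c_2$, and part (b) follows from the classical bulk scaling limit of its correlation kernel, which is expressible through a sum of Hermite polynomials in the complex plane and collapses to $K_{\strong}$ of \eqref{kernel_strong} at scale $1/\sqrt{CN}$ in the interior of $E$; see e.g.~\cite[Chapter 18]{Handbook}.

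The main technical obstacle will be making the Laplace analysis of the $s$-integral fully rigorous. Two issues need care. First, for $s$ far from $s^\ast$ the effective coupling $\t(s)$ leaves the stability interval $(-1,1)$, so the partition function has to be controlled by contour deformation together with tail bounds on the modulus of the elliptic partition function. Second, the asymptotics must be uniform in $Z$ on compact subsets of $E^\circ$ and in $z_1,\dots,z_k$ on compacts of $\C$, which requires uniform bounds on derivatives of the elliptic kernel and on the subleading terms in the saddle-point expansion. For the fixed-trace ensemble one moreover has to combine the Fourier representation of the delta constraint with the Schur-decomposition-based integral representation of $\rho_{N,\FT}^k$ from Lemma \ref{Lemma_FT}, so that the off-diagonal Schur entries can be integrated out in a manner compatible with the subsequent saddle-point estimate.
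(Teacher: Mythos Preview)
Your overall strategy --- linearize the non-Gaussian factor via Hubbard--Stratonovich and then localize the resulting one-dimensional integral --- is exactly the paper's. The execution, however, differs in two respects worth noting.

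First, the paper does not run a Laplace/saddle-point argument on a phase $e^{N\Phi(s)}$. Instead it performs the recentering \emph{before} linearizing: the constant $K$ of Lemma~\ref{Lemma_K} is chosen so that $\Tr JJ^*-N(K_p+K)$ is $O(1)$ under $P_{a,b}$, which amounts to shifting your saddle $s^*$ to the origin in advance. After this shift the integrand is manifestly bounded (the ratio of expectations in \eqref{identity_corr_functions} is bounded by a fixed constant), and the Gaussian weight $e^{-t^2/(4\g)}$ alone localizes the $t$-integral to $\lv t\rv=\O(\sqrt{\log N})$. Your saddle-point route is equivalent, but the paper's recentering avoids contour deformation entirely for the trace-squared case.

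Second --- and this is the real gap in your sketch --- you cannot simply invoke ``known asymptotics of the elliptic Ginibre ensemble at fixed $\t^*\in(-1,1)$'' and claim the $k$-point function is slowly varying in $s$. After linearization the effective parameter is genuinely complex ($a(t)=a-it$ in the paper's notation), and the resulting $P_{a(t),b}$ is a complex-valued measure, not an elliptic ensemble in the literature sense. What is needed is convergence of the kernel to $K_{\strong}$ \emph{uniformly} for $t$ in a growing window (and hence for complex couplings), with an explicit $\O(N^{-1/2})$ rate. The paper proves this from scratch (Proposition~\ref{strong_linearized}) via the double-contour integral representation of Hermite polynomials and the incomplete gamma asymptotics of Lemma~\ref{lemma_gamma}; this uniform statement is the technical core and is not available as a black box. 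For the fixed-trace ensemble the issue is sharper still: there is no Gaussian weight in $t$, so one must play the $\lv t\rv^{-N^2}$ decay of the $\chi^2$ characteristic function against the $\lv t\rv^{kN}$ growth of $\lv\rho^k_{\hat a(t),b}\rv$, which requires the kernel asymptotics for $t$ comparable to $N$ and beyond (Section~\ref{Sec_proof_FT}).
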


\begin{remark}
\noindent
\begin{enumerate}
\item
The previous theorem shows in particular that the elliptic fixed trace ensemble \eqref{FT} and trace-squared ensemble \eqref{def_matrixensemble} belong to the same strongly non-Hermitian bulk universality class as the Ginibre ensemble. 
\item For the trace-squared ensemble, the constant $C$ and the elliptic set $E$ are given in Proposition \ref{strong_linearized}. The set $E$ differs from the limiting support of the elliptic ensemble, except if $K_p=1$ or $\g=0$, in which case $C=(1-\t^2)^{-1}$.
\item In \cite{TV15} for the Ginibre ensemble and \cite{Lee-Riser} for the elliptic ensemble, the convergence is shown to be exponentially fast. In view of these results, it is likely that the bounds on the rate of convergence in Theorem \ref{theorem_strong} can be improved. As this is not one of the main purposes of this work, we will not pursue this here.
\item Of course, the models \eqref{FT} and \eqref{def_matrixensemble} can also be considered on the set of $N\times N$ normal 
matrices. Theorem \ref{theorem_strong} and Theorem \ref{theorem_weak} below extend to this situation as well. Note that when 
considered as a normal matrix model, the joint density of the eigenvalues of the trace-squared ensemble is proportional to
\begin{align*}
	   \prod_{j<l}\lv z_j-z_l\rv^2\exp\lr-\frac{N}{1-\t^2}\lb\sum_{j=1}^N \lv z_j\rv^2-\frac{\t}{2}(\sum_{j=1}^N 
	   	z_j^2+\overline{z_j}^2)\rb-\g\lb\sum_{j=1}^N\lv z_j\rv^2-NK_p\rb^2\rr.
\end{align*}
When considered on $\C^{N\times N}$, the model does not have the same eigenvalue distribution. Note further that when considering 
a fixed trace model of normal matrices, its eigenvalue distribution is not absolutely continuous, since the constraint $\Tr 
JJ^*=NK_p$ implies $\sum_{j=1}^N\lv z_j\rv^2=NK_p$. Correlation functions $\rho_N^k$ defined as marginal densities therefore exist 
for $k<N$ only. They can be defined via the relation
	\begin{align}
	\int f(z_1,\dots,z_k)\rho_N^k(z_1,\dots,z_k)dz=\E_N\sum_{j_1\not=j_2\not=\dots\not=j_k}^Nf(z_{j_1},\dots,z_{j_k}),\label{characterization_corr_functions}
	\end{align}
where $f$ is a test function and $\E_N$ denotes expectation w.r.t.~the fixed trace (eigenvalue) measure.
\end{enumerate}
\end{remark}
Let us also give an overview over results in the literature on universality of the kernel $K_{\strong}$. In \cite{TV15}, the Ginibre kernel was shown for the non-Hermitian analogs of Wigner matrices, i.e.~random matrices with independent entries (and with independent real and imaginary parts), that have exponentially decaying distributions and fulfill certain moment conditions. Due to the lack of spectral calculus for matrices without any symmetries, there is no clear non-Hermitian analog of the rich class of unitary invariant ensembles in Hermitian RMT. In general, there is a conflict between having a normalizable density (i.e.~a well-defined model) and having an eigenvalue distribution in closed form.

One class of models in the literature is given by densities proportional to
\begin{align}
	\exp[-\s N\Tr(JJ^*)+\Re \Tr(\Phi(J))],\label{general_density}
\end{align}
where $\s>0$ and $\Phi$ is a potential. The ensembles \eqref{general_density} are determinantal, but over $\C^{N\times N}$ the density is normalizable only for very specific potentials $\Phi$. Basically, these are either $\Phi(J)=J^2$, corresponding to the elliptic ensemble, or logarithmic with special coefficient, see the discussion in \cite[Chapter 39]{Handbook}. This problem can be circumvented by simply considering the density on a sufficiently large compact set instead of $\C^{N\times N}$ (\cite{ElbauFelder}, see also \cite{BleherKuijlaars}). However, local universality has so far not been shown for these truncated ensembles.

A common way around these difficulties is to consider normal matrix models. For normal matrices, the spectral calculus allows to define unitary invariant ensembles, e.g.~with densities proportional to $\exp[-N\Tr V(JJ^*)]$ for some potential $\map{V}{\C}{\R}$ of sufficient growth at infinity. These models have determinantal eigenvalue distributions and belong to the same (bulk) universality class as the Ginibre ensemble \cite{AHM11,Ber08}. \\

We continue with the weakly non-Hermitian situation. Note the slightly differing values of $\t$ in parts a) and b) of the following theorem.

\begin{thrm}[Limit of weak non-Hermiticity]\label{theorem_weak}
Let $\rho_N^k$ denote either $\rho_{N,\FT}^k$ or $\rho_{N,\tr}^k$.
There is a constant $C>0$, depending only on $K_p$ and in the case $\rho_N^k=\rho_{N,\tr}^k$ also on $\g$, such that the following 
holds: 
\begin{enumerate}
\item Let $\t=\t_N=1-\frac{\kappa}{N}$ with $\kappa>0$ fixed. Then for any $Z\in\C\setminus\R$
\begin{align}
	\lim_{N\to\infty}\frac{1}{N}\rho_{N}^1(Z)=0,\label{density_weak_1}
\end{align}
and for any $X\in\R$
\begin{align}
\lim_{N\to\infty}\int_{\R}\frac{1}{N}\rho_{N}^1(X+iY)dY=\frac C{2\pi}\sqrt{\frac4C-X^2}\,1_{[-\frac{2}{\sqrt{C}},\frac2{\sqrt{C}}]}(X).\label{density_weak_2}
\end{align}
\item Set $\nu(X):=\frac{C}{2\pi}\sqrt{\frac{4}{C}-X^2}$ and $\t=\t_N:=1-\dfrac{\a^2}{2N\nu(X)^2}$, $\a>0$. Then for $k=1,2\dots$, as $N\to\infty$
	\begin{align*}
		&\frac{1}{\lb N\nu(X)\rb^{2k}}		\rho_{N}^{k}\lb X+\frac{z_1}{N\nu(X)},\dots,X+\frac{z_k}{N\nu(X)}  \rb\\&=\det\lb K_{\weak}\lb 
z_j,z_l\rb\rb_{j,l=1,\dots k}+\O\lb \frac{\log N}{\sqrt{N}}\rb,
\end{align*}
		where we denote $z_j:=x_j+iy_j$ and define
		\begin{align*}
		&K_{\weak}\lb z_1,z_2\rb:=\frac{\sqrt{2}}{\sqrt{\pi}\a}\exp\lr-\frac{
			y_1^2+y_2^2}{ \a^2 }\rr\frac{1}{2\pi}
		\int_{-\pi}^{\pi}\exp\lr-\frac{\a^2u^2}{2}+iu(z_1-\bar{z}_2)\rr
		du.
	\end{align*}
	The $\O$ term is uniform for $X\in (-\frac{2}{\sqrt{C}}+\d,\frac2{\sqrt{C}}-\d)$ for any $\d>0$ fixed and 
any $z_j, j=1,\dots,k$ chosen from an arbitrary compact subset of $\C$.
\end{enumerate}
\end{thrm}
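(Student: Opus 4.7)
My plan is to reduce both $P_{N,\FT}$ and $P_{N,\tr}$ to the determinantal elliptic Ginibre ensemble via a linearization, and then to invoke the known weakly non-Hermitian asymptotics of the complex Hermite kernel at the parameter selected by a saddle-point analysis.

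I would first linearize the non-Gaussian constraint. For $P_{N,\tr}$ the Hubbard--Stratonovich identity
\begin{align*}
e^{-\g(\Tr JJ^{*}-NK_{p})^{2}}=\sqrt{\g/\pi}\int_{\R} e^{-\g u^{2}+2i\g u(\Tr JJ^{*}-NK_{p})}\,du
\end{align*}
writes $P_{N,\tr}$ as a one-parameter mixture of elliptic-type ensembles in which the coefficient of $\Tr JJ^{*}$ picks up a purely imaginary shift. After a rescaling $J=\lambda(u)\tilde J$ that restores the canonical elliptic form, one obtains a genuine elliptic ensemble in $\tilde J$ at an effective parameter $\tilde\t(u)$ close to $\t$, times a scalar Jacobian weight. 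For $P_{N,\FT}$ the integral representation of Lemma \ref{Lemma_FT} plays the same role via the Fourier representation of $\d$. In both cases the $k$-point correlation function becomes a one-dimensional contour integral of the determinantal elliptic $k$-point function, evaluated at $\tilde\t(u)$ on the rescaled arguments $\lambda(u)z_{j}$.

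Next I would perform a saddle-point analysis in $u$. The $u$-dependence of the log-integrand is dominated by the $N^{2}$-scale elliptic free energy, so the integral concentrates near a real saddle $u^{*}$ determined by the condition that the effective elliptic ensemble has $\E[\Tr JJ^{*}]/N=K_{p}$; this in turn fixes $\lambda(u^{*})$, and the constant $C$ in the theorem equals $\lambda(u^{*})^{-2}$. Laplace's method then reduces the integral to the elliptic correlation function at $(\tilde\t^{*},\lambda(u^{*})z_{j})$ up to a controlled error. For part (a), the elliptic-ensemble global density at $\t=1-\kappa/N$, a semicircle on $[-2,2]$ with Gaussian vertical profile of width $\sim 1/N$, rescales under $\lambda(u^{*})$ to give \eqref{density_weak_1}--\eqref{density_weak_2}. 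For part (b), the scaling $\t_{N}=1-\a^{2}/(2N\nu(X)^{2})$ translates under the rescaling into the canonical weak-non-Hermiticity scaling at the bulk point $\sqrt{C}\,X$ of the elliptic ensemble with density $\nu(X)/\sqrt C$, and the Fyodorov--Khoruzhenko--Sommers steepest-descent analysis of the complex Hermite kernel then produces $K_{\weak}$ verbatim.

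The main obstacle is obtaining the $\O(\log N/\sqrt N)$ error control in part (b). The Gaussian saddle-point expansion itself contributes an error of order $1/\sqrt N$; however, the weak-non-Hermiticity elliptic kernel asymptotics must be proved with a remainder that is explicit and uniform both in the spectral arguments on compacta and in the bulk point $X$ on compact subsets of the interior of the limiting support. Producing this uniformity requires a quantitative refinement of the pointwise Fyodorov--Khoruzhenko--Sommers saddle-point analysis of the complex Hermite polynomials, and the $\log N$ arises when these bounds are combined with the tail control of the saddle integral. For the hard-constraint model $P_{N,\FT}$ an additional technical step is needed to deform the Fourier contour through $u^{*}$ before the saddle analysis can proceed, but once the uniform elliptic asymptotics are in hand the two cases run in parallel.
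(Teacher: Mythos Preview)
Your broad outline---linearize the constraint, reduce to an elliptic-type determinantal ensemble, extract the weak-non-Hermiticity kernel---matches the paper's strategy. But two of your technical steps do not go through as written, and the paper handles them differently. The rescaling $J=\lambda(u)\tilde J$ that you claim ``restores the canonical elliptic form'' cannot work: on the real $u$-line produced by Hubbard--Stratonovich the coefficient of $\Tr JJ^*$ is genuinely complex, and since $\Tr JJ^*\mapsto|\lambda|^2\Tr\tilde J\tilde J^*$ for any scalar $\lambda$, no rescaling makes it real. The paper does not try to return to a real elliptic ensemble; it works directly with the complex-valued linearized ensemble $P_{a(t),b}$, $a(t)=a-it$, which is still determinantal with Hermite polynomials orthogonal for the complex weight (Lemma~\ref{lemma_weight}). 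The role of your saddle $u^*$ is played instead by a \emph{recentering} constant $K$ chosen in Lemma~\ref{Lemma_K} so that $\Tr JJ^*-N(K_p+K)$ is $O(1)$ under $P_{a,b}$; after this recentering there is no $N^2$-scale Laplace analysis at all---the $t$-integral is controlled by $e^{-t^2/(4\gamma)}$ and the trivial bound $|\E_{a,b}e^{it(\cdots)}|\le1$.

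Second, invoking ``the Fyodorov--Khoruzhenko--Sommers steepest-descent analysis'' as a black box is a gap: that derivation is heuristic, and producing it rigorously with the stated uniform error is exactly the content of Proposition~\ref{prop_kernel_linearized}. The paper's route is not a refinement of the FKS saddle-point but a different mechanism: the Hermite sum in $K_{a(t)}$ is rewritten via a double integral in which the sum collapses to an incomplete gamma function $Q(N,\cdot)$, and Temme's uniform asymptotics (Lemma~\ref{lemma_gamma}) turn $Q$ into an indicator of the limiting support plus an $\O(\log N/\sqrt N)$ remainder. That is where the $\log N$ actually originates---not from combining saddle tails with separate kernel bounds as you suggest. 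For the fixed-trace case the Gaussian cutoff in $t$ is absent, and the additional work (Section~\ref{Sec_proof_FT}) is to establish uniform-in-$N$ integrability of the $t$-integrand by balancing the growth of $K_{\hat a(t)}$ against the $|t|^{-N^2}$ decay of the characteristic function; your proposed contour deformation through $u^*$ is not what the paper does and would not by itself supply this integrability.
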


\begin{remark}\label{remark_weak}\noindent
\begin{enumerate}
	\item Part a) of the previous theorem shows that the support of the limiting measure collapses to the real axis whenever $\t$ is in any $1/N$ neighborhood of $1$. Moreover, the limiting marginal density of the real part $X$ is the semicircle density $\nu(X)$, which is used in part b) to rescale not only the $z_j$'s, but also $\t$, in order to make the limiting kernel independent of $X$.
	\item $K_{\weak}$ agrees with the kernel given in \cite{FKS}, where $x_1=-x_2$ was chosen. We have simply rescaled $\a$ to 
make the kernel independent of $X$. The 
independence of $K_{\weak}$ on $\g$ and $K_p$ constitutes 
our universality result for
the non-Gaussian ensembles \eqref{FT} and \eqref{def_matrixensemble}
in the limit of weak non-Hermiticity.
	\item It is not hard to show that in the limit $\a\to0$ we obtain
	\begin{align*}
	&\lim_{\a\to0}\int_{\C^k
}f(z_1,\dots,z_k)\det\lb K_{\weak}\lb 
z_j,z_l\rb\rb_{j,l=1,\dots,k}dz_1\dots dz_k\\
&=\int_{\R^k} f(x_1,\dots,x_k)\det\lb \frac{\sin(\pi(x_j-x_l))}{\pi(x_j-x_l)}\rb_{j,l=1,\dots,k}dx_1\dots dx_k
	\end{align*}
	for any bounded and continuous function $\map{f}{\C^k}{\R}$ of bounded support.
	
	For the limit $\a\to\infty$, we need to rescale the variables $z_j$ with $\a$ in order to account for the difference in the local scales (in $N$) in the limits of weak and strong non-Hermiticity. Here it is straightforward to get
\begin{align*}
		&\lim_{\a\to\infty}\det\lb\a^2 K_{\weak}\lb \a z_j,\a z_l\rb\rb_{j,l=1,\dots,k}=\det\lb K_{\strong}\lb z_j,z_l\rb\rb_{j,l=1,\dots,k}.
	\end{align*}	
	\item In \cite{Ledoux}, the first correlation function of the elliptic ensemble $\rho^1_{N,\ell}(X+\frac{iy}{N})$ was studied in the limit of weak non-Hermiticity. Here, the variable on the real axis is in the global scaling regime whereas the imaginary variable is in the local scaling regime. With this particular choice of variables, one sees a transition from the semicircle law to the circular law.
	\item For the trace-squared ensemble, the constant $C$ in Theorem \ref{theorem_weak} is given by 
\begin{align*}
 C=\frac1{2}-2\g K_p+\frac12\sqrt{16\g^2 K_p^2-8\g K_p+16\g+1}.
\end{align*}
In particular, if $\g=0$ or $K_p=1$, then $C=1$. For the elliptic fixed trace ensemble, $C$ is given as $C=K_p^{-1}$.
\end{enumerate}
\end{remark}

Let us 
add some further
remarks. It was found in \cite{Ben10} that a similar weakly non-Hermitian scaling limit can be defined at the edge of the spectrum. 
There, for the largest real eigenvalue an interpolation between the Tracy-Widom and Gumbel distribution was found, where the latter 
corresponds to the Fredholm determinant of the complementary error function kernel of the Ginibre ensemble. The same interpolating 
kernel was found in a chiral variant of the Ginibre ensemble \cite{ABen10} and it was shown to be a one-parameter deformation of the 
Airy kernel.
We refer to \cite{AP14} for an entire list of one-parameter interpolating kernels at the edge, in the bulk and at the origin,
corresponding to the limits in Ginibre ensembles and its chiral partners with real, complex or quaternion matrix elements, see also  
references therein.

We have already mentioned the 2D-Coulomb gases of the complex eigenvalues in \eqref{Coulomb}. Without going into the vast 
literature, let us mention that recent results on 
fluctuations of such complex $\beta$-ensembles around their macroscopic limits have been considered up to the finest possible scale 
in \cite{Leble15,LebleSerfaty16}. Fluctuations of linear statistics have been studied in \cite{BBNY16}, whereas edge universality 
has been shown in \cite{Chafaietal13}.

Let us finish this section with some comments on our methods of proofs and organization of the article. The general idea is to 
understand the ensembles under consideration as averages over perturbations of simpler ensembles. These simpler ensembles will turn 
out to 
be complex-valued measures with determinantal eigenvalue distributions (also in the sense of complex-valued measures) that can be 
analyzed using a double contour integral formula.  Generally speaking, the asymptotics of the trace-squared ensemble 
\eqref{def_matrixensemble} 
are
easier to analyze as the model is absolutely continuous.
Its analysis will turn out to be useful for studying the fixed trace models as well. Recall that $P_{N,\tr}(J)$ is proportional to 
$P_{N,\ell}(J)\exp\lr -\g(\Tr JJ^*-NK_p)^2\rr$, where $P_{N,\ell}$ has been defined in \eqref{elliptic}. If $\Tr JJ^*$ is 
sufficiently well concentrated around $NK_p$, then the 
trace-squared term $\g(\Tr JJ^*-NK_p)^2$ can be expected to be of order 1 and thus might not have much influence at least on the 
global limit of the model. The square in this term can then be linearized at the expense of an additional integral by understanding 
$\exp\lr -\g(\Tr JJ^*-NK_p)^2\rr$ as the Fourier transform of a Gaussian random variable. This gives rise to a Gaussian average over 
elliptic ensembles, perturbed by a term of the form $\exp\lr it\Tr JJ^*\rr$ (with integration variable $t$), and to an expression of 
$P_{N,\tr}$ as average over linearized ensembles (see \eqref{linearization_full}). A similar expression of $P_{N,\FT}$ can not 
possibly 
hold, but from the expression of $P_{N,\tr}$ we can conclude a related formula for the correlation functions of $P_{N,\FT}$ (see 
Lemma \ref{Lemma_FT}). 

However, except for the special cases $\g=0$ or $K_p=1$, $\Tr JJ^*$ will not be close to $NK_p$ and thus the statistic $\g(\Tr 
JJ^*-NK_p)^2$ will not be small. In this case a necessary recentering needs to be executed first. This and the linearization for the 
trace-squared ensemble are done in Section \ref{Sec-linear}. The asymptotics of the linearized ensemble are then derived in Section 
\ref{section_soft}. Using these results, a proof of the main theorems for the trace-squared ensemble is given in Section 
\ref{Sec-proof}. The significantly more complex analysis of the fixed trace ensemble is done in Section \ref{Sec_proof_FT}.\\

{\sc Acknowledgements:} The work was supported by the German research council DFG through grants AK35/2-1 (G.A.), 
IGK "Stochastics and Real World Models" Beijing--Bielefeld (M.C.), and 
CRC 701 "Spectral Structures and Topological Methods in Mathematics" (M.V.), as well as by the European Research Council under the European Unions Seventh
Framework Programme (FP/2007/2013)/ ERC Grant Agreement n.  307074 (M.V.).

\section{Recentering and Linearization}\label{Sec-linear}
We will start our analysis of the ensemble \eqref{def_matrixensemble} by a recentering of the density which takes into 
account a part of $\g 
(\Tr JJ^*-NK_p)^2$ that has an influence on the global distribution of the eigenvalues. It is not hard to see 	that 
$\lim_{N\to\infty}\E_{N,\ell}\Tr(JJ^*)/N=1$, where $\E_{N,\ell}$ denotes expectation w.r.t.~the elliptic 
ensemble \eqref{elliptic}. Hence for $K_p=1$ the trace constraint does not change much and the global asymptotics (such as the support of 
the limiting 
ellipse) of the ensembles $P_{N,\tr}$ and \eqref{elliptic} should coincide. Thus in this case no recentering 
should be necessary as the penalization just reinforces the convergence of $N^{-1}\Tr JJ^*$ to $K_p$.
In contrast, choosing $K_p$ different from 
$1$ enforces $N^{-1}\Tr JJ^*$ to have a different limit, which should lie between $K_p$ and $1$. In this case it will be 
convenient to renormalize the ensemble. To this end, let us for $a>b\geq0$ consider the family of densities 
$P_{a,b}$ with
\begin{align}
 P_{a,b}(J):=\frac1{Z_{a,b}}\exp\lr{-a\Tr JJ^*+\frac b2\Tr \lb J^2+{J^*}^2\rb}\rr\label{K_density}
\end{align}
and normalization constant $Z_{a,b}$.
The motivation for introducing \eqref{K_density} comes from the following manipulation of $P_{N,\tr}$. Let $K>-(2\g(1+\t))^{-1}$ and rewrite
\begin{align}
 &P_{N,\tr}(J)\notag\\
 &=\frac{\exp\lr{-N\lb\frac {1}{1-\t^2}+2\g K\rb\Tr JJ^*+\frac{\t 
N}{2(1-\t^2)}\Tr \lb J^2+{J^*}^2\rb-\g 
(\Tr JJ^*-N(K_p+K))^2}\rr}{Z_{N,\tr}\exp\lr{-\g N^2(K^2+2KK_p)}\rr}\label{EV_density}\\
&=\frac{Z_{a,b}}{Z_{N,\tr}\exp\lr{-\g N^2(K^2+2KK_p)}\rr}P_{a,b}(J)\exp\lr{-\g 
(\Tr JJ^*-N(K_p+K))^2}\rr\notag
\end{align}
with 
\begin{align*}
	 a:=N\lb\frac {1}{1-\t^2}+2\g 
	 K\rb,\qquad b:=\frac{\t N}{1-\t^2}.
\end{align*}
For the rest of the paper, we stick to this choice of $a$ and $b$. 
Note that the condition \newline$K>-(2\g(1+\t))^{-1}$ ensures that $a>b$, and thus the normalizability of $P_{a,b}$.
Furthermore, comparing \eqref{EV_density} and \eqref{K_density} shows that
\begin{align}
	\frac{Z_{N,\tr}\exp\lr{-\g N^2(K^2+2KK_p)}\rr}{Z_{a,b}}=\E_{a,b}\exp\lr{-\g 
		(\Tr \tilde{J}\tilde{J}^*-N(K_p+K))^2}\rr,\label{Laplace}
\end{align}
where $\E_{a,b}$ denotes expectation w.r.t. $P_{a,b}$ and we use the following convention throughout the paper: In equations, the matrix $\tilde{J}$ is an integration variable, whereas $J$ denotes an arbitrary, but fixed matrix. However, as there is no ambiguity, we will also let $J$ denote the random matrix associated to a specified ensemble.

The following lemma gives the optimal choice of the so far arbitrary $K$ in \eqref{EV_density}. In short, we determine $K$ such that the statistic $\Tr JJ^*-N(K_p+K)$ is concentrated under $P_{a,b}$.
\begin{lemma}\label{Lemma_K}
	\noindent
	\begin{enumerate}
		\item
 For each $\t\in(-1,1)$, $\g\geq0$ and $K_p\in\R$, there is a unique $K=K(\t)=K(\t,\g,K_p)$ such that for some constants $C_1,C_2$ 
independent of $\t$ and $N$, we 
have for all $N$ 	
\begin{align}
 0<C_1\leq\E_{a,b}\exp\lr{-\g 
 	(\Tr \tilde{J}\tilde{J}^*-N(K_p+K))^2}\rr\leq C_2<\infty.\label{bounds}
\end{align}
\item
$K$ of a) is the unique positive zero of the cubic equation \eqref{cubic}. The limit \newline$\bar{K}:=\lim_{\t\to1}K(\t)$ exists,
\begin{align*}
 \bar{K}>-\frac{1}{4\g}
\end{align*}
and
\begin{align}
 \lv K(\t)- \bar{K}\rv=\O(\lv \t-1\rv)\text{ as }\t\to1.\label{K_derivative}
\end{align}
\item
If $\lv\t-1\rv=\O(1/N)$, \eqref{bounds} holds with $K$ replaced by $\bar{K}$.
\item $K$ is continuous in $\g$ and the limit $K_\FT:=\lim_{\g\to\infty}\g K$ exists and is larger than $\max\{-(2(1-\t))^{-1},-(2(1+\t))^{-1}\}$. Furthermore, $\bar{K}_\FT:=\lim_{\t\to1}K_\FT=\frac{K_p^{-1}-1}{4}$ and $\lv K_\FT-\bar{K}_\FT\rv=\O(\lv\t-1\rv)$.
\end{enumerate}
\end{lemma}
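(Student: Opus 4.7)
The plan is to choose $K(\t)$ so that under the Gaussian measure $P_{a,b}$ the statistic $\Tr JJ^*-N(K_p+K)$ has mean zero and $O(1)$ variance uniformly in $(\t,N)$, and then to extract the asymptotic behaviour of $K$ from an explicit cubic equation.

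First I would exploit the Gaussian structure of $P_{a,b}$. Writing $J=X+iY$ with $X=(J+J^*)/2$ and $Y=(J-J^*)/(2i)$ Hermitian, one has $\Tr JJ^*=\Tr X^2+\Tr Y^2$ and $\Tr(J^2+{J^*}^2)=2(\Tr X^2-\Tr Y^2)$, so $P_{a,b}$ factorises into two independent GUE-type laws proportional to $e^{-(a-b)\Tr X^2}$ and $e^{-(a+b)\Tr Y^2}$. Elementary Gaussian moment computations then give
\begin{align*}
\E_{a,b}[\Tr JJ^*]=\frac{N^2 a}{a^2-b^2},\qquad \Var_{a,b}(\Tr JJ^*)=\frac{N^2}{2(a-b)^2}+\frac{N^2}{2(a+b)^2}.
\end{align*}
Substituting $a-b=N[(1+\t)^{-1}+2\g K]$ and $a+b=N[(1-\t)^{-1}+2\g K]$, the mean-matching condition $\E_{a,b}[\Tr JJ^*]=N(K_p+K)$ becomes, after clearing denominators, a cubic polynomial in $K$; this will be the equation \eqref{cubic} of part (b). On the range where $a>b$ the map $K\mapsto\E_{a,b}[\Tr JJ^*]/N-(K_p+K)$ is strictly decreasing from $+\infty$ to $-\infty$, so the cubic has a unique root $K(\t)$ there, which a short sign analysis identifies with the positive zero referred to in (b).

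For (a), the upper bound in \eqref{bounds} is trivial ($\leq 1$), while the lower one follows from Chebyshev's inequality: the explicit variance formula together with the lower bounds on $a\pm b$ that hold uniformly in $\t$ (consequence of the monotonicity just used) shows $\Var_{a,b}(\Tr JJ^*)=O(1)$ uniformly in $(\t,N)$, so there is a positive-probability event on which $|\Tr JJ^*-N(K_p+K)|\leq M$ for some universal $M$, yielding $\E_{a,b}[e^{-\g(\Tr JJ^*-N(K_p+K))^2}]\geq\tfrac12 e^{-\g M^2}$.

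Parts (b)--(d) are then asymptotic analyses of the cubic. For (b), writing $u=(1-\t^2)^{-1}$ and sending $u\to\infty$, the leading $u$-terms reduce the cubic to a quadratic in $\bar K$; its root consistent with $\bar K>-1/(4\g)$ is the announced $\bar K$, and the implicit function theorem applied after rescaling (e.g.\ to $\eps=1-\t$) at $(\eps,K)=(0,\bar K)$, whose $K$-derivative is nonzero by the monotonicity above, produces $|K(\t)-\bar K|=O(|\t-1|)$. Part (c) is a perturbation of (a): replacing $K(\t)$ with $\bar K$ shifts $a\pm b$ by $O(N|\t-1|)=O(1)$ and the target mean by $N|K(\t)-\bar K|=O(1)$, so the same Chebyshev argument with slightly adjusted constants still yields \eqref{bounds}. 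For (d), substituting $h:=\g K$ and letting $\g\to\infty$ converts the cubic into a quadratic in $h$ whose admissible root (consistent with the limiting constraints $h>-(2(1\pm\t))^{-1}$) defines $K_\FT(\t)$; continuity of $K$ in $\g$ follows from the implicit function theorem for the cubic in $(K,\g)$, and the $\t\to 1$ balance applied to the limiting quadratic gives $\bar K_\FT=(K_p^{-1}-1)/4$ together with $|K_\FT-\bar K_\FT|=O(|\t-1|)$. The main technical point I foresee is keeping all constants uniform in $(\t,N,\g)$ within the relevant regimes, which ultimately reduces to checking that the various implicit function theorem derivatives stay bounded away from zero -- transparent from the strict monotonicity established in (a).
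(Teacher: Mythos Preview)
Your proposal is correct and follows the same overall strategy as the paper: both determine $K$ by a mean-matching condition and then analyse the resulting cubic via the implicit function theorem for (b)--(d). In fact your condition $\E_{a,b}[\Tr JJ^*]=N(K_p+K)$ is algebraically identical to the paper's $K_p+K=2N\sigma_O^2$ (one checks $\sigma_{D,\Re}^2+\sigma_{D,\Im}^2=2\sigma_O^2$, so the diagonal and off-diagonal contributions combine to give exactly $N^2a/(a^2-b^2)$), and both lead to the same cubic \eqref{cubic}.

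The genuine difference lies in how $\Tr JJ^*$ is handled. You split $J=X+iY$ into two independent GUE-type Hermitian matrices, read off mean and variance in closed form, and obtain the lower bound in \eqref{bounds} by Chebyshev's inequality. The paper instead works at the level of individual matrix entries, obtaining $\Tr JJ^*$ as a linear combination of four independent $\chi^2$ variables with explicit weights $\lambda_\pm^2,\sigma_{D,\Re}^2,\sigma_{D,\Im}^2$, and proves (a) by showing weak convergence of the centred statistic to a concrete Gaussian, hence an explicit nonzero limit for the expectation. Your route is shorter and perfectly adequate for this lemma. The paper's finer $\chi^2$ representation is not strictly needed here, but it is reused verbatim in the proof of Lemma~\ref{Lemma_FT}, where the density of $\Tr JJ^*-NK_p$ and its derivatives must be controlled; if you adopt your decomposition you would have to redo that analysis (which is still routine, since $\Tr X^2$ and $\Tr Y^2$ are themselves sums of independent $\chi^2$ variables).
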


\begin{proof}

It is easy to check that under $P_{a,b}$, the matrix elements $\{\Re 
J_{j,k}, \Im J_{j,k}\}_{j,k}$  are jointly Gaussian 
random variables with mean 0 and covariance structure as follows. Diagonal entries $\Re 
J_{j,j}, \Im J_{j,j}$ have the variances
\begin{align}
 \s^2_{D,\Re}:=\frac{1+\t}{2N(1+2\g K(1+\t))},\quad \s^2_{D,\Im}:=\frac{1-\t}{2N(1+2\g K(1-\t))},\label{variance_diag}
\end{align}
respectively,
off-diagonal entries $\Re 
J_{j,k}, \Im J_{j,k}, j\not=k$ the variance
\begin{align}
 \s^2_O:=\frac{1+2\g K(1-\t^2)}{2N(1+4\g K+4\g^2K^2(1-\t^2))}\label{variance_off}
\end{align}
and the covariances are 0 except for ($j\not=k$)
\begin{align}
\rho:=\textup{Cov}(\Re 
J_{j,k},\Re 
J_{k,j})=\frac{\t}{2N(1+4\g K+4\g^2K^2(1-\t^2))}\label{cov_real}
\end{align}
and 
\begin{align*}
 \textup{Cov}(\Im 
J_{j,k},\Im 
J_{k,j})=-\frac{\t}{2N(1+4\g K+4\g^2K^2(1-\t^2))}=-\rho.
\end{align*}

The matrix $J$ can be associated with a vector $\underline{J}\in\R^{2N^2}$ that has a multivariate normal distribution with mean 
$0$ and covariance matrix $\Sigma$ that is of block diagonal form. Each block consists either of $\s_{D,\Re}^2,\ \s_{D,\Im}^2$, or $2\times 2$ 
matrices with $\s_{O}^2$ on the diagonal and $\pm\rho$ off the diagonal. $\underline{J}$ has the same distribution as 
$\underline{U}\underline{Y}$ where $\underline{U}$ is a $2N^2\times 2N^2$ unitary matrix (diagonalizing $\Sigma$) and 
$\underline{Y}$ is Gaussian with 
independent components with mean 0. The variances of the elements of $\underline{Y}$ are $\s_{D,\Re}^2$ or $\s_{D,\Im}^2$ for $N$ 
components each, and $\l_+^2$ or $\l_-^2$ for $N(N-1)$ 
components each, where 
\begin{align*}
 \l_\pm^2:=\s_O^2\pm\rho
\end{align*}
are the eigenvalues of those blocks of $\Sigma$ that contain off-diagonal entries. As 
$\Tr(JJ^*)=\|\underline{J}\|_2^2\overset{d}{=}\|\underline{Y}\|_2^2$, where $\overset{d}{=}$ means equality of distributions, 
we find that 
\begin{align}
 \Tr(JJ^*)\overset{d}{=}\l_+^2Z_1+\l_-^2Z_2+\s_{D,\Re}^2Z_3+\s_{D,\Im}^2Z_4,\label{chi}
\end{align}
where $Z_1,Z_2,Z_3,Z_4$ are independent $\chi^2$ distributed random variables with $N(N-1)$, \newline$N(N-1),N$ and $N$ degrees of 
freedom, 
respectively.

Let us now choose $K=K(\t,\g,K_p)$ such that 
\begin{align*}
	K_p+K=2N\s_O^2,
\end{align*}
 in other words
 \begin{align}
  K_p+K=\frac{1+2\g K(1-\t^2)}{1+4\g K+4\g^2K^2(1-\t^2)}.\label{condition2}
 \end{align}
Note that $K$ does not depend on $N$. It is not hard to see that the r.h.s.~has poles at $K=-(2\g(1\pm\t))^{-1}$ and is strictly 
decreasing in $K$ for $K>\max\{-(2\g(1-\t))^{-1},-(2\g(1+\t))^{-1}\}$. As 
 the l.h.s.~of \eqref{condition2} is a strictly increasing continuous function on $\R$, there is precisely one 
 $K>\max\{-(2\g(1-\t))^{-1},-(2\g(1+\t))^{-1}\}$ satisfying \eqref{condition2}. It is the rightmost (real) solution of the cubic equation
 \begin{align}
  4\g^2(1-\t^2) K^3+4\g(1+\g(1-\t^2)K_p) K^2+(1+4\g K_p-2\g(1-\t^2))K+K_p-1=0.\label{cubic}
 \end{align}
 It can in principle be computed explicitly but its exact form is not important here.
 
 As $\t\to1$, \eqref{condition2} becomes
 \begin{align*}
  K_p+K=\frac{1}{1+4\g K},
 \end{align*}
 which has, analogously to \eqref{condition2}, only one solution $\bar{K}:=\lim_{\t\to1}K(\t)$ that satisfies $\bar{K}>-\frac1{4\g}$. It is given by
 \begin{align*}
  \bar{K}=-\frac{K_p}{2}-\frac1{8\g}+\frac1{8\g}\sqrt{16\g^2 K_p^2-8\g K_p+16\g+1}.
 \end{align*}
 Furthermore, by applying the implicit function theorem to the function
 \begin{align*}
  F(\t,K):=4\g^2(1-\t^2) K^3+4\g(1+\g(1-\t^2)K_p) K^2+(1+4\g K_p-2\g(1-\t^2))K+K_p-1,
 \end{align*}
 we find that the derivative $K'(\t)$ is bounded in a neighborhood of $\t=1$ and thus \eqref{K_derivative} follows. This proves b).

 Let us turn to proving a). We have by \eqref{chi} and \eqref{condition2}
\begin{align}\label{sum_rv}
	\Tr(JJ^*)-N(K_p+K)\overset{d}{=}\bar{\l}_+^2\sqrt{2}\frac{Z_1-N(N-1)}{\sqrt{2N^2}}+\bar{\l}_-^2\sqrt{2}\frac{Z_1-N(N-1)}{\sqrt{2N^2}
	}+\s_{D,\Re}^2Z_3+\s_{D,\Im}^2Z_4,
\end{align}
where $\bar{\l}_\pm^2:=2N\l_\pm^2$ do not depend on $N$. Now $(Z_j-N(N-1))/\sqrt{2N^2}, j=1,2$ converge weakly towards  standard 
normals as $N\to\infty$ and $\s_{D,\Re}^2Z_3,\s_{D,\Im}^2Z_4$ converge weakly to constants. Since $Z_i$, $i=1,2,3,4$ are independent, we have weak convergence of \eqref{sum_rv} to a Gaussian distribution and hence
\begin{align*}
	\lim_{N\to\infty} \E_{a,b}\exp\lr{-\g 
		(\Tr \tilde{J}\tilde{J}^*-N(K_p+K))^2}\rr=\exp\lr{-\frac{C\g}{1+c\g}}\rr\lb 1+c\g\rb^{-1/2}
\end{align*}
for some $C,c>0$, i.e.~convergence to the moment-generating function of a non-central chi-squared distribution. This proves \eqref{bounds}.

If $\lv\t-1\rv=\O(1/N)$, \eqref{sum_rv} holds with $\bar{K}$ replacing $K$ and adding $N(K-\bar K)=\O(1)$ to the r.h.s.~of \eqref{sum_rv}. This shows c).

Similar reasoning as above shows with \eqref{condition2} that $\g K$ solves for $\g\to\infty$ the equation
\begin{align*}
	 4(1-\t^2)K_p(\g K)^2+(4K_p-2(1-\t^2))(\g K)+K_p-1=0,
\end{align*}
from which the first part of d) can be checked. The second part follows immediately.
\end{proof}

From now on, let $K$ always denote the quantity from Lemma \ref{Lemma_K}.
The linearization uses the simple identity
\begin{align}
 \exp\lr{-\g X^2}\rr=\frac{1}{\sqrt{4\pi\g}}\int_\R \exp\lr{iXt}\rr \exp\lr{-\frac{t^2}{4\g}}\rr dt,\label{linearization}
\end{align}
valid for any real $X$.
In the physics literature, this is known as the Hubbard-Stratonovich transform (in its simplest form). With \eqref{linearization}, we may rewrite 
$P_{N,\tr}$ as 
\begin{align*}
 P_{N,\tr}(J)=\frac{1}{\sqrt{4\pi\g}}\int_\R&\frac{\exp\lr{-iN(K_p+K)t}\rr}{Z_{N,\tr} \exp\lr{-\g N^2(K^2+2KK_p)}\rr}\\
&\times \exp\lr{\lb-a+it\rb\Tr JJ^*+\frac{b}{2}\Tr \lb J^2+{J^*}^2\rb}\rr \exp\lr{-\frac{t^2}{4\g}}\rr dt.
\end{align*}
Setting
\begin{align*}
	a(t):=a-it,
\end{align*}
we have (extending the definition \eqref{K_density} to the complex $a(t)$)
\begin{align*}
 \frac{Z_{a(t),b}}{Z_{a,b}}=\E_{a,b}\exp\lr{it\Tr \tilde{J}\tilde{J}^*}\rr,
\end{align*}
and thus by \eqref{chi}, $Z_{a(t),b}/Z_{a,b}$ is the product of characteristic functions of $\chi^2$ distributed random 
variables. Hence the 
function
\begin{align*}
 t\mapsto \E_{a,b}\exp\lr{it\Tr \tilde{J}\tilde{J}^*}\rr
\end{align*}
has no zeros on the real line and the ``linearized ensemble''
\begin{align*}
  &P_{a(t),b}(J)=\frac{1}{Z_{a(t),b}}\exp\lr{\lb-N\lb\frac {1}{1-\t^2}+2\g 
K\rb+it\rb\Tr JJ^*+\frac{\t N}{2(1-\t^2)}\Tr \lb J^2+{J^*}^2\rb}\rr
\end{align*}
is well-defined.
The term ``ensemble'' here is only a convenient naming, in general $P_{a(t),b}$ is complex-valued. Summarizing, we arrive at
\begin{align*}
	P_{N,\tr}(J)=\frac1{\sqrt{4\pi\g}}\int_\R\frac{Z_{a(t),b}\exp\lr{-iN(K_p+K)t}\rr}{Z_{N,\tr} \exp\lr{-\g N^2(K^2+2KK_p)}\rr}P_{a(t),b}(J)\exp\lr{-\frac{t^2}{4\g}}\rr dt,
\end{align*}
which in turn can be rewritten (after multiplying and dividing by $Z_{a,b}$) as
\begin{align}
	P_{N,\tr}(J)=\frac1{\sqrt{4\pi\g}}\int_\R \frac{\E_{a,b}\exp\lr{it 
		(\Tr \tilde{J}\tilde{J}^*-N(K_p+K))}\rr}{\E_{a,b}\exp\lr{-\g 
		(\Tr \tilde{J}\tilde{J}^*-N(K_p+K))^2}\rr}P_{a(t),b}(J)\exp\lr{-\frac{t^2}{4\g}}\rr dt.\label{linearization_full}
\end{align}

One advantage of the 
linearization is that the joint distribution of eigenvalues (also in the sense of a complex-valued measure) of $P_{a(t),b}$ can 
be given explicitly as
\begin{align}
   &P_{a(t),b}(z):=\frac{1}{Z_{a(t),b}^{\textup{EV}}}\prod_{j<l}\lv z_j-z_l\rv^2\exp\lr{-a(t)\sum_{j=1}^N \lv z_j\rv^2+\frac{b}{2}(\sum_{j=1}^N 
z_j^2+\overline{z_j}^2)}\rr,\label{EV_density_epsilon}
\end{align}
where we abused notation by using the same symbol for the matrix and the eigenvalue distribution. The superscript EV in the 
normalization constant indicates that $Z_{a(t),b}$ and $Z_{a(t),b}^{\textup{EV}}$ differ.  The density 
\eqref{EV_density_epsilon} will be analyzed in the next section. By continuity it is clear that the joint distribution of eigenvalues of $P_{N,\tr}$ also has a continuous density. Hence we can speak of its $k$-th correlation function $\rho_{N,\tr}^{k}$, which we define as in \eqref{def_correlation_functions}.

In terms of correlation functions, we get from \eqref{linearization_full}
\begin{align}
	\rho_{N,\tr}^{k}(z)=\frac1{\sqrt{4\pi\g}}\int_\R \frac{\E_{a,b}\exp\lr{it 
			(\Tr \tilde{J}\tilde{J}^*-N(K_p+K))}\rr}{\E_{a,b}\exp\lr{-\g 
			(\Tr \tilde{J}\tilde{J}^*-N(K_p+K))^2}\rr}\rho_{a(t),b}^{k}(z)\exp\lr{-\frac{t^2}{4\g}}\rr dt,\label{identity_corr1}
\end{align}
where $\rho_{a(t),b}^{k}$ denotes the $k$-th correlation function of $P_{a(t),b}$ and $z$ is an abbreviation for 
$z_1,\dots,z_k$. Moreover,
\begin{align}
		&\check{\rho}_{N,\tr}^{k}(\check z)-\det(K_{\weak/\strong}(z_j,z_l))_{j,l\leq k}\notag\\
		&=\frac1{\sqrt{4\pi\g}}\int_\R \frac{\E_{a,b}e^{it 
				(\Tr \tilde{J}\tilde{J}^*-N(K_p+K))}}{\E_{a,b}e^{-\g 
				(\Tr 
\tilde{J}\tilde{J}^*-N(K_p+K))^2}}\lb\check{\rho}_{a(t),b}^{k}(\check{z})-\det(K_{\weak/\strong}(z_j,z_l))_{j,l\leq k}\rb 
e^{-\frac{t^2}{4\g}}dt,\label{identity_corr_functions}
\end{align}
where $\check{\rho}_{N,\tr}^{k}(\check z)$ and $\check{\rho}_{a(t),b}^{k}(\check{z})$ denote the appropriately rescaled correlation 
functions of the rescaled variables, e.g.~in the situation of Theorem \ref{theorem_strong} b) 
\begin{align}
 \check\rho^k_{N,\tr}=\frac{1}{(CN)^k}\rho_{N,\tr}^k,\quad \check z_j=Z+\frac{z_j}{\sqrt{CN}},\quad j=1,\dots,k.\label{rescaling}
\end{align}

\begin{remark}\label{non-determinantal}
From \eqref{identity_corr1}, we see the non-determinantality of the eigenvalue distribution of $P_{N,\tr}$. Its correlation functions are not determinants themselves, but rather averages of determinants.
\end{remark}

\section{Asymptotics for the Linearized Ensemble}\label{section_soft}
In this section, we will employ orthogonal polynomials and asymptotic analysis to obtain the asymptotic behavior of the linearized correlation functions $\rho_{a(t),b}^{k}$, as $N\to\infty$. For $\t\in(-1,1),\t\not=0$, the orthogonal polynomials are Hermite polynomials, whereas for $\t=0$ the orthogonal polynomials are simple monomials. We will first concentrate on the much more involved case $\t\not=0$, the case $\t=0$ will be dealt with at the end of the proof of Proposition \ref{strong_linearized} below.

Let $(H_k)_{k\in\mathbb{N}}$ denote the sequence of Hermite polynomials, that is (cf. \cite[22.10.0]{AbramowitzStegun})
\begin{align}
 H_k(z):=\frac{k!}{2\pi i}\oint \exp\lr{-{t^2}+2zt}\rr t^{-(k+1)}dt,\label{Hermite_polynomial}
\end{align}
where the contour encircles the origin. It is well-known that these polynomials form an orthogonal sequence in $L^2(\R)$ w.r.t.~the 
weight $\exp(-t^2)$, i.e. for $k\not=l$
\begin{align*}
 \int H_k(t)H_l(t)e^{-t^2}dt=0.
\end{align*}
Our analysis depends crucially on the fact that the Hermite polynomials are also orthogonal on the complex plane w.r.t.~certain 
Gaussian measures, a fact first noticed in \cite{EM90,DiFrancescoetal94}. This can be translated to complex weights easily.
Recall first
\begin{align*}
	a(t)=N\lb\frac {1}{1-\t^2}+2\g 
	K\rb-it,\qquad b=\frac{\t N}{1-\t^2},
\end{align*}
and $a(0)=a$. Since $b$ will remain unchanged, we will omit it in newly defined quantities to ease the notation.
\begin{lemma}\label{lemma_weight}
 Consider the function
\begin{align*}
 W_{a(t)}(z):=\exp\lr{-a(t)\lv z\rv^2+\frac b2(z^2+\bar{z}^2)}\rr.
\end{align*}
 Then
\begin{align}
 \int_\C H_l(c_{a(t)}z)H_k(c_{a(t)}\bar{z})W_{a(t)}(z)dz=\delta_{lk}\frac{k!\pi(2a(t))^k}{\sqrt{a(t)^2-b^2}\,b^k},\label{Hermite}
\end{align}
where \begin{align*}
       c_{a(t)}:=\sqrt{\frac{a(t)^2-b^2}{2b}}
      \end{align*}
and $\sqrt{\cdot}$ denotes the principal branch.
\end{lemma}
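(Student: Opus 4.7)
The plan is to prove \eqref{Hermite} by substituting the contour integral representation \eqref{Hermite_polynomial} for both Hermite polynomials, swapping the order of integration, evaluating the resulting Gaussian integral over $\C$, and then extracting the coefficient via Cauchy's formula.

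First I would write
\begin{align*}
H_l(c_{a(t)}z)H_k(c_{a(t)}\bar z)=\frac{l!\,k!}{(2\pi i)^2}\oint\oint \exp\bigl(-s^2-u^2+2c_{a(t)}zs+2c_{a(t)}\bar z u\bigr)\,s^{-(l+1)}u^{-(k+1)}\,ds\,du,
\end{align*}
with the two contours encircling the origin. Inserting this into the left-hand side of \eqref{Hermite} and moving the $\C$-integration inside (which is justified because the $s,u$ contours are compact and, as shown below, the inner integral converges absolutely), the core computation is the Gaussian integral
\begin{align*}
I(s,u):=\int_\C \exp\Bigl(-a(t)|z|^2+\tfrac{b}{2}(z^2+\bar z^2)+2c_{a(t)}zs+2c_{a(t)}\bar z u\Bigr)\,dz.
\end{align*}
Writing $z=x+iy$, the exponent becomes $-(a(t)-b)x^2-(a(t)+b)y^2+2c_{a(t)}(s+u)x+2ic_{a(t)}(s-u)y$. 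Since $\Re a(t)=a>b>0$, both $\Re(a(t)\pm b)>0$, so the two one-dimensional Gaussian integrals converge and yield
\begin{align*}
I(s,u)=\frac{\pi}{\sqrt{a(t)^2-b^2}}\exp\!\left(\frac{c_{a(t)}^2(s+u)^2}{a(t)-b}-\frac{c_{a(t)}^2(s-u)^2}{a(t)+b}\right).
\end{align*}
A short algebraic simplification using $c_{a(t)}^2=(a(t)^2-b^2)/(2b)$ reduces the exponent to $s^2+u^2+\tfrac{2a(t)}{b}su$, which exactly cancels the $e^{-s^2-u^2}$ coming from the Hermite representation.

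What remains is then
\begin{align*}
\frac{l!\,k!\,\pi}{(2\pi i)^2\sqrt{a(t)^2-b^2}}\oint\!\oint \exp\!\left(\frac{2a(t)}{b}su\right)s^{-(l+1)}u^{-(k+1)}\,ds\,du.
\end{align*}
Expanding the exponential and applying Cauchy's formula to each contour, only the $n$-th term with $n=l=k$ contributes, yielding the $\delta_{lk}\,k!\,\pi(2a(t))^k/(\sqrt{a(t)^2-b^2}\,b^k)$ asserted by \eqref{Hermite}.

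The main subtleties I expect are: (i) justifying the swap of the $\C$-integral with the contour integrals, which amounts to verifying absolute integrability of the full integrand uniformly in $s,u$ on the chosen contours; (ii) checking that the principal branch of $\sqrt{a(t)^2-b^2}$ is unambiguous, which follows from the observation that $(a-it)^2-b^2$ is never a negative real for $t\in\R$ (its imaginary part $-2at$ vanishes only at $t=0$, where the value $a^2-b^2$ is strictly positive); and (iii) consistently choosing the same branch of $c_{a(t)}$ throughout, so that the identity $2b\,c_{a(t)}^2=a(t)^2-b^2$ used in the simplification is unambiguous. All of these are routine once the formula $c_{a(t)}^2=(a(t)^2-b^2)/(2b)$ is kept in mind, so the argument reduces essentially to a Gaussian integration followed by residue extraction.
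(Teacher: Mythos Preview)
Your proof is correct and follows precisely the approach the paper indicates: substitute the contour integral representation \eqref{Hermite_polynomial} for both Hermite polynomials, carry out the Gaussian integral over $\C$, and extract the residues. The paper merely states that the computation is routine and leaves it to the reader; you have supplied exactly those details, including the cancellation $c_{a(t)}^2(s+u)^2/(a(t)-b)-c_{a(t)}^2(s-u)^2/(a(t)+b)=s^2+u^2+2a(t)su/b$, which is the key algebraic identity.
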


\begin{proof} 
Using the integral representation \eqref{Hermite_polynomial} and the residue theorem one can easily verify \eqref{Hermite} which we leave to the reader.
\end{proof}

Define $(p_k)_{k\in\mathbb N}$ as the sequence of orthonormal polynomials $p_k(z):=C_{a(t),k}H_k(c_{a(t)}z)$ with 
 $C_{a(t),k}:=\lbb\frac{k!\pi(2a(t))^k}{\sqrt{a(t)^2-b^2}b^k}\rbb^{-1/2}$ such that 
\begin{align*}
 \int_\C p_l(z)p_k(\overline{z})W_{a(t)}(z)dz=\d_{lk}.
\end{align*}
Now, using standard arguments, it is seen that the ensemble $P_{a(t),b}$ is determinantal, i.e. its correlation functions 
$\rho_{a(t),b}^k$, $k=1,2,\dots$,
\begin{align*}
 \rho_{a(t),b}^k(z_1,\dots,z_k)=\frac{N!}{(N-k)!}\int_{\C^{N-k}} P_{a(t),b}(z)dz_{k+1}\dots dz_N
\end{align*}
fulfill
\begin{align*}
 \rho_{a(t),b}^k(z_1,\dots,z_k)=\det\lb K_{a(t)}(z_i,z_j)\rb_{1\leq i,j\leq k}
\end{align*}
with the kernel
\begin{align}
 K_{a(t)}(z_1,z_2):=\sum_{j=0}^{N-1}p_j(z_1)p_j(\overline{z_2})\sqrt{W_{a(t)}(z_1)}\sqrt{W_{a(t)}(\overline{z_2})}.\label{def_kernel}
\end{align}
The analysis of correlation functions of $P_{a(t),b}$ thus boils down to an analysis of the kernel $K_{a(t)}$. In the limit of weak 
non-Hermiticity  we will prove
\begin{prop}\label{prop_kernel_linearized}
Define $C_{\bar{K}}:=1+4\g \bar{K}$ with $\bar{K}$ from Lemma \ref{Lemma_K}. Let $X\in (-\frac{2}{\sqrt{C_{\bar{K}}}},\frac2{\sqrt{C_{\bar{K}}}})$. As $N\to\infty$, we have with  $\t=\t_N:=1-\dfrac{\tilde{\a}^2}{2C_{\bar{K}}^2N}$, $\tilde{\a}>0$,
\begin{align*}
&\frac{1}{C_{\bar{K}}^2N^2}K_{a(t)}\lb 
X+\frac{x_1+iy_1}{C_{\bar{K}}N},X+\frac{x_2+iy_2}{C_{\bar{K}}N}\rb=\frac{1}{\pi}\exp\Big[-\frac{
y_1^2+y_2^2}{ \tilde{\a}^2 } +iX\frac{(y_1-y_2)}2\Big]\\
&\times \frac1{\sqrt{2\pi}\tilde{\a}}
\int_{-\frac12\sqrt{\frac{4}{C_{\bar{K}}}-X^2}}^{\frac12\sqrt{\frac{4}{C_{\bar{K}}}-X^2}}\exp\Big[-\frac{\tilde{\a}^2u^2}{2}
+iu(x_1-x_2)-u(y_1+y_2)\Big]
du 
+\O\lb \frac{\log N}{\sqrt{N}}\rb.
\end{align*}
The $\O$ term is uniform for $X\in (-\frac{2}{\sqrt{C_{\bar{K}}}}+\d,\frac2{\sqrt{C_{\bar{K}}}}-\d)$ for any $\d>0$ fixed, any $x_j,y_j, j=1,2$ chosen from an arbitrary compact subset of $\R$ and $\lv t\rv=\O(\sqrt{\log N})$.
\end{prop}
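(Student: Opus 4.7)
The plan is to proceed by a double-contour-integral representation of the kernel, followed by a steepest-descent analysis that respects the weak non-Hermiticity scaling. First I would replace each Hermite polynomial by an integral representation, but crucially a \emph{different} one for each factor in the product $H_j(cz_1)H_j(c\bar z_2)$: for $H_j(c\bar z_2)$ I use the contour representation \eqref{Hermite_polynomial}, and for $H_j(cz_1)$ the real-axis representation $H_j(w)=(2^j/\sqrt{\pi})\int_{\R}(w+iy)^j e^{-y^2}\,dy$. The advantage is that the combined $j$-dependence in $C_{a(t),j}^2 H_j(cz_1)H_j(c\bar z_2)$ reduces to a pure power $\left(b(cz_1+iy)/(au)\right)^j$, so that the sum over $j=0,\dots,N-1$ collapses to the geometric series
\begin{align*}
\sum_{j=0}^{N-1}\left(\frac{b(cz_1+iy)}{au}\right)^{j}=\frac{1-\big(b(cz_1+iy)/(au)\big)^{N}}{1-b(cz_1+iy)/(au)}.
\end{align*}
This yields, after multiplication by $\sqrt{W_{a(t)}(z_1)W_{a(t)}(\bar z_2)}\,\sqrt{a^{2}-b^{2}}/\pi$, an expression for $K_{a(t)}(z_1,z_2)$ as a double integral in $y\in\R$ and $u$ on a contour around the origin, with integrand split into two pieces corresponding to the $1$ and the $(\cdots)^N$ in the numerator above.

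Next I would analyse the two pieces. The $1$-piece can be evaluated essentially in closed form: deforming $\Gamma$ so that it encloses the pole at $u=b(cz_1+iy)/a$ and applying the residue theorem leaves a Gaussian $y$-integral whose value combines with the weights into a clean exponential of the form $e^{-(b/2)z_{1}^{2}+az_{1}\bar z_{2}-(b/2)\bar z_{2}^{2}}$, multiplied by the weight square-roots. For the $(\cdots)^{N}$-piece I insert the scalings $\tau=1-\tilde\alpha^{2}/(2C_{\bar K}^{2}N)$, $z_{j}=X+(x_{j}+iy_{j})/(C_{\bar K}N)$ and $|t|=O(\sqrt{\log N})$ from Lemma \ref{Lemma_K}, substitute $u=\sqrt{C_{\bar K}N/2}\,v$ and $y=\sqrt{C_{\bar K}N/2}\,\tilde y$, and read off the large-$N$ exponent as a phase $N\Phi(v,\tilde y;X)$ plus lower-order terms. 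The stationary points of $\Phi$ are real, sit on a one-parameter family indexed by a ``spectral'' variable and, within the bulk condition $|X|<2/\sqrt{C_{\bar K}}$, parameterise precisely the segment $[-\tfrac12\sqrt{4/C_{\bar K}-X^{2}},\tfrac12\sqrt{4/C_{\bar K}-X^{2}}]$ that appears in the statement.

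Having located the critical set, I would deform the $v$-contour to a steepest-descent path that crosses each saddle transversely, while the $\tilde y$-contour is simply translated by an imaginary constant to align with the dual steepest-descent direction. Expanding $\Phi$ to quadratic order in the fluctuation variables yields a Gaussian integral transverse to the saddle manifold: this produces the Gaussian prefactor $e^{-(y_{1}^{2}+y_{2}^{2})/\tilde\alpha^{2}}/(\sqrt{2\pi}\tilde\alpha)$ and the factor $(1/\pi)e^{iX(y_{1}-y_{2})/2}$ after combining with the weight square-roots and with the divergent factor $e^{az_{1}\bar z_{2}}$ contributed by the $1$-piece. Integrating along the remaining (tangent) direction of the saddle manifold, over the spectral parameter $u$, produces the announced single integral over $[-\tfrac12\sqrt{4/C_{\bar K}-X^{2}},\tfrac12\sqrt{4/C_{\bar K}-X^{2}}]$ with integrand $\exp\!\big[-\tilde\alpha^{2}u^{2}/2+iu(x_{1}-x_{2})-u(y_{1}+y_{2})\big]$.

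The main obstacle will be the uniformity of the error bound. The parameter $t$ shifts $a\mapsto a-it$ by an imaginary amount that can be as large as $\sqrt{\log N}$, so the location of the saddle and the admissible steepest-descent contour move correspondingly; I must show that these deformations remain valid and the quadratic expansion retains the same sign, uniformly in $|t|=O(\sqrt{\log N})$ and in $X\in(-2/\sqrt{C_{\bar K}}+\delta,2/\sqrt{C_{\bar K}}-\delta)$. This accounts for the $\log N$ factor in the $O(\log N/\sqrt{N})$ error. A secondary delicate point is the cancellation of the two huge $O(e^{N^{\textup{const}}})$ contributions from the $1$-piece and from the $(\cdots)^{N}$-piece: the leading exponents must be matched to the subleading order before the remainder is of size $1$, so one must keep track of prefactors and quadratic terms with care. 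A final bookkeeping step handles the tails of the $y$-integration, where the Gaussian decay dominates the algebraic growth and produces only an exponentially small contribution.
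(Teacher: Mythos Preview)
Your plan follows a genuinely different route from the paper. The paper uses the \emph{same} real-axis representation for both factors $H_j(c_{a(t)}z_1)$ and $H_j(c_{a(t)}\bar z_2)$, which produces the exponential series $\sum_j (2b/a(t))^j(rs)^j/j!$ rather than a geometric series. After the rotation $(r,s)\mapsto(u,v)$ this sum is recognised as the normalised incomplete gamma function $Q(N,\cdot)$, and Temme's uniform asymptotics (Lemma~\ref{lemma_gamma}) convert $Q(N,Nz)$ directly into the indicator $1_{\{u^2+X^2\le 4/C_{\bar K}\}}$ up to an $\O(\log N/\sqrt N)$ error. The $v$-integral then concentrates (its Gaussian has width $N^{-1/2}$), while the $u$-integral has an $\O(1)$ Gaussian width in the weak limit, so it simply runs over the interval cut out by the indicator. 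No splitting into individually large pieces is needed, and the ``huge cancellation'' you flag never arises.

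Your double-contour/geometric-series route can in principle be made to work, but the mechanism you describe is not correct. The phase $\Phi(v,\tilde y)$ you obtain has \emph{isolated} critical points, not a one-parameter critical manifold: from $\partial_v\Phi=0$ one gets $Cv^2-CXv+1=0$, i.e.\ $v_\pm=X/2\pm\tfrac{i}{2}\sqrt{4/C_{\bar K}-X^2}$, and $\partial_{\tilde y}\Phi=0$ gives an equally discrete set. So there is no ``tangent direction of the saddle manifold'' to integrate along, and the segment in the statement does not arise this way. In the standard double-contour analysis the segment comes instead from a pole-crossing (or coalescing saddle--pole) mechanism: deforming the $v$-contour to its steepest-descent path through $v_\pm$ forces it across the simple pole at $v=\tilde u_*(\tilde y)\approx X+i\tilde y$ for a certain range of $\tilde y$, and it is the residue picked up there, integrated over that range of $\tilde y$, that produces the interval integral. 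The endpoints $\pm\tfrac12\sqrt{4/C_{\bar K}-X^2}$ are precisely the values of $\Im v_\pm$, marking where the pole enters and exits the region swept by the deformation. You would also need to show that the genuine saddle contributions from the $(\cdots)^N$-piece cancel the $1$-piece to all orders in $N$ (your ``secondary delicate point'' is in fact the primary one), whereas in the paper's approach this cancellation is already packaged inside the known asymptotics of $Q(N,\cdot)$.
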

\begin{remark}\label{remark_uniform}
	\begin{enumerate}
\item The reader will note that $\tilde{\a}$ in Proposition \ref{prop_kernel_linearized} differs from $\a$ in Theorem \ref{theorem_weak} by a factor $\nu(X)^{-1}$. The scaling of the local variables $z_j$ differs also in the same way. This difference is  purely due to notational convenience.
\item The $\O$ term is also uniform in $\g \bar{K}$ in the following sense. Treating $\g \bar{K}$ as an independent variable, Proposition \ref{prop_kernel_linearized} holds uniformly for $\g\bar{K}$ in any compact subset of $(-1/4,\infty)$. This is needed in Section \ref{Sec_proof_FT} when dealing with the fixed trace ensembles.
\end{enumerate}
\end{remark}
The limit of strong non-Hermiticity corresponds to a fixed $\t\in(-1,1)$. 
\begin{prop}\label{strong_linearized}
 Let $\t\in(-1,1)$ be fixed and $k$ be a nonnegative integer. As $N\to\infty$, we have with $C:=\frac{1}{1-\t^2}+2\g K$
\begin{align*}
 &\frac{1}{(CN)^k}\rho^k_{a(t),b}\lb Z+\frac{z_1}{\sqrt{CN}},\dots,Z+\frac{z_k}{\sqrt{CN}}\rb=\det(K_{\strong}({z_j,z_l})_{j,l\leq k}+\O\lb\frac{1}{\sqrt{N}}\rb,
\end{align*}
where $K_{\strong}$ has been defined in \eqref{kernel_strong}.
Here $Z$ is chosen from the interior of the elliptic set
\begin{align*}
E:=\left\{Z\in\C\,:\,\frac{1-\t+2\g K(1-\t^2)}{1+\t+2\g K(1-\t^2)}\Re Z^2+\frac{1+\t+2\g K(1-\t^2)}{1-\t+2\g K(1-\t^2)}\Im Z^2\leq\frac{1}{C}\right\}.
\end{align*}
The $\O$ term is uniform for $Z$ from any compact subset of $E^\circ$, $z_1,\dots,z_k$ from compacts of $\C$ and $\lv t\rv=\O(\sqrt{\log N})$.
\end{prop}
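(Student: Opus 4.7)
The plan is to exploit the determinantality of $P_{a(t),b}$, which via \eqref{def_kernel} reduces the proposition to an asymptotic analysis of the kernel $K_{a(t)}$ in the bulk. Once one establishes $\frac{1}{CN}K_{a(t)}(Z + z_1/\sqrt{CN}, Z + z_2/\sqrt{CN}) = K_{\strong}(z_1,z_2) + \O(1/\sqrt{N})$ uniformly for $Z$ in a compact subset of $E^\circ$, for $z_1,z_2$ in compacts of $\C$, and for $|t| = \O(\sqrt{\log N})$, the proposition follows by expanding $\det(K_{a(t)}(z_j,z_l))_{j,l \leq k}$ through multilinearity and Leibniz, which propagates the $\O(1/\sqrt{N})$ error and preserves its uniformity.

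For $\tau \neq 0$, I would substitute the contour integral \eqref{Hermite_polynomial} for one Hermite polynomial factor in each summand and the real integral representation $H_k(x) = \frac{2^k}{\sqrt{\pi}} \int_{\R} (x+iy)^k e^{-y^2}\, dy$ for the other. The $k$-dependent part of the resulting summand then takes the geometric form $\xi(s,y;t)^k$, and $\sum_{k=0}^{N-1}\xi^k = (1-\xi^N)/(1-\xi)$ produces a double-integral representation of $K_{a(t)}$ split into a ``main term'' from the $1$ and a ``tail term'' from the $-\xi^N$. The method of steepest descent is then applied to both: after the bulk rescaling and combination with the Gaussian weights from $\sqrt{W_{a(t)}}$, the main term produces the Ginibre profile $K_{\strong}$ of \eqref{kernel_strong}, while the condition $Z \in E^\circ$ is exactly the condition that the tail term is exponentially suppressed, since the saddle point of $\log \xi^N$ combined with the Gaussian weights then sits strictly off the deformed steepest descent contour. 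Matching the saddle point analysis with the effective parameters $a = NC - it$ and $b = N\tau/(1-\tau^2)$ recovers the explicit algebraic form of $E$ stated in the proposition.

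The complex shift $a(t) = a - it$ with $|t| = \O(\sqrt{\log N})$ is a relative perturbation of $a$ of size $\O(\sqrt{\log N}/N)$, so the saddle points depend analytically on $t$ and the leading contributions are perturbed only by a factor $1 + \O(1/\sqrt{N})$, uniformly in the admissible $t$-range; the steepest descent contours can be chosen $t$-independently by small deformations of the unperturbed ones. Uniformity in $Z$ on a compact subset of $E^\circ$ and in the $z_j$ on compacts follows from the uniform non-degeneracy of the Hessian at the saddle points on such sets, together with standard tail bounds on the contour/Gaussian integrals. The case $\tau = 0$ is simpler: the orthogonal polynomials for $W_{a(t)}$ degenerate to monomials and $K_{a(t)}$ reduces to $\frac{a(t)}{\pi} e^{-\frac{a(t)}{2}(|z_1|^2+|z_2|^2)} \sum_{k=0}^{N-1} \frac{(a(t)z_1 \bar z_2)^k}{k!}$, whose bulk asymptotics in the disc $\{|Z|^2 < 1/C\}$ with $C = 1 + 2\gamma K$ follow directly from exponentially fast truncation of the incomplete exponential series $\sum_{k=0}^{N-1} \zeta^k/k!$ against $e^{\zeta}$ for $|\zeta|/N$ bounded away from $1$.

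The main technical obstacle is obtaining uniform error estimates for the steepest descent analysis simultaneously in the three parameters, $Z$ ranging over a compact subset of $E^\circ$, the local variables $z_j$ ranging over compacts of $\C$, and $t$ ranging up to $\O(\sqrt{\log N})$, since the relevant saddle points move with both $Z$ and $t$ and the contour deformations must be valid throughout this regime. A secondary technical point is verifying that the explicit algebraic form of $E$ in the proposition coincides with the locus where the tail contribution switches from exponentially small to non-negligible; this requires a careful identification of the saddle equations for $K_{a(t)}$ in terms of $C = 1/(1-\tau^2) + 2\gamma K$, $\tau$, and the effective parameter $\gamma K$ from Lemma \ref{Lemma_K}.
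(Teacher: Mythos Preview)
Your approach is valid but differs substantively from the paper's, and the differences are worth recording.

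You build the double-integral representation of $K_{a(t)}$ from one \emph{contour} integral \eqref{Hermite_polynomial} and one \emph{real} Gaussian integral, so that the $j$-sum becomes a geometric series $(1-\xi^N)/(1-\xi)$ and the analysis splits into a main term and a tail term, each handled by steepest descent. The paper instead inserts the real Gaussian representation for \emph{both} Hermite factors (with opposite signs), so that the $j$-sum becomes an exponential partial sum and hence the normalized incomplete gamma function $Q(N,\cdot)$; after a linear change of variables this yields the closed form \eqref{gamma1}. The asymptotics are then read off from Temme's uniform expansion (Lemma~\ref{lemma_gamma}): the two Gaussian integrals concentrate, and $Q(N,Nz)$ collapses to the indicator of the ellipse $E$ with error $\O(1/\sqrt N)$. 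No saddle-point tracking is needed, and uniformity in $Z$, in $z_1,\dots,z_k$, and in $|t|=\O(\sqrt{\log N})$ is immediate from the explicit $\O(1)$-size of the perturbations in the quadratic exponents. Your route trades this off for a more standard steepest-descent calculation at the cost of locating moving saddles and managing the pole at $\xi=1$ (which is precisely the boundary $\partial E$); the paper's route is shorter here because the incomplete gamma asymptotics are off-the-shelf.

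One correction to your reduction: it is \emph{not} true that $\frac{1}{CN}K_{a(t)}$ converges to $K_{\strong}$ at the level of the kernel. The limit carries an explicit unimodular gauge factor of the form $g(z_j)/g(\bar z_l)$ coming from the cross terms $iX(y_1-y_2)$, $iY(x_1-x_2)$, etc. (in the paper, the factor $\exp\big[\frac{i(a(t)-b)X(y_1-y_2)}{\sqrt N}-\frac{i(a(t)+b)Y(x_1-x_2)}{\sqrt N}-ib\frac{x_1y_1-x_2y_2}{N}\big]$). These factors cancel only when one passes to $\det(K_{a(t)}(z_j,z_l))_{j,l}$, via the conjugation invariance $\det(K(z_j,z_l))=\det(K(z_j,z_l)f(z_j)/f(\bar z_l))$. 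Your steepest-descent computation will produce the same gauge factor, so you should state the kernel asymptotics up to such a conjugation and invoke that invariance explicitly, rather than claim pointwise convergence to $K_{\strong}$. The $\tau=0$ case you describe matches the paper's treatment.
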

\begin{remark}
	The $\O$ term is uniform for $\g K$ from compact subsets of $(-(2(1+\t))^{-1},\infty)$ (cf.~Remark \ref{remark_uniform}).
\end{remark}

A technical ingredient in 
the proof of Propositions \ref{prop_kernel_linearized} and \ref{strong_linearized} is stated in the following lemma for the normalized upper incomplete gamma 
function
\begin{align*}
 Q(w,z):=\frac{\Gamma(w,z)}{\Gamma(w)},\quad \Gamma(w,z):=\int_z^\infty t^{w-1}e^{-t}dt.
\end{align*}
In these definitions $w$ and $z$ are real and positive but $\Gamma(w,z)$ and $Q(w,z)$ can in fact be continued to analytic 
functions in the complex plane, provided $w>0$. We will use the same symbols for these continued functions. 
\begin{lemma}[\cite{Temme}]\label{lemma_gamma}\noindent
Denote $\eta:=\sqrt{2(z-1-\log z)}$, where we 
choose the branch of the square root such that it has  the same sign as $z-1$ for real positive $z$, and by continuity elsewhere. 
Then, as $w\to\infty$,
\begin{align}
 Q(w,wz)=\frac{1}{2}\erfc\lb\eta\sqrt{(w/2)}\rb+\O\lb \frac{e^{-\frac w2\eta^2}}{\sqrt{w}}\rb,\label{Temme1}
\end{align}
where $\erfc$ denotes the complementary error function
\begin{align*}
 \erfc(z):=\frac{2}{\sqrt{\pi}}\int_z^\infty e^{-t^2}dt,
\end{align*}
and the $\O$ term is uniform 
in the domain $\lv\arg(z)\rv\leq 3\pi/2-\d$ with arbitrary $\d>0$, i.e.~for $z=re^{i\phi}$ with 
$-2\pi+\d\leq\phi\leq2\pi-\d, r>0$.
\end{lemma}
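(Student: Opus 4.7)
The plan is to combine the integral representation of the upper incomplete gamma function with a saddle point change of variables due to Temme. Starting from $\Gamma(w,wz)=\int_{wz}^{\infty}t^{w-1}e^{-t}\,dt$ and substituting $t=wu$, one obtains $\Gamma(w,wz)=w^w\int_{z}^{\infty}e^{-w(u-\log u)}\,du/u$. Combined with Stirling's formula $\Gamma(w)=\sqrt{2\pi/w}\,w^we^{-w}(1+\mathcal O(1/w))$, this yields
\begin{align*}
Q(w,wz)=\sqrt{\frac{w}{2\pi}}\bigl(1+\mathcal O(1/w)\bigr)\int_{z}^{\infty}e^{-w\varphi(u)}\frac{du}{u},\qquad \varphi(u):=u-1-\log u.
\end{align*}
The phase $\varphi$ has a unique non-degenerate minimum at $u=1$ with $\varphi(1)=0$, so that a naive Laplace analysis would already give the leading term; the subtlety here is to keep the estimate \emph{uniform} as the endpoint $z$ crosses the saddle (equivalently, as $\eta$ crosses zero).

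Next I would perform Temme's change of variable $\zeta=\zeta(u)$ defined implicitly by $\zeta^{2}/2=u-1-\log u$, normalized so that $\zeta(u)$ has the same sign as $u-1$ for real positive $u$. Since $\varphi$ has a double zero at $u=1$, the inverse function theorem produces a holomorphic bijection $\zeta\mapsto u(\zeta)$ in a neighbourhood of the origin, and $z$ is mapped to $\eta$ by construction. The integral becomes
\begin{align*}
\int_{z}^{\infty}e^{-w\varphi(u)}\frac{du}{u}=\int_{\eta}^{\infty}e^{-w\zeta^{2}/2}g(\zeta)\,d\zeta,\qquad g(\zeta):=\frac{1}{u(\zeta)}\frac{du}{d\zeta},
\end{align*}
where $g$ is holomorphic near $\zeta=0$ with $g(0)=1$, as read off from the expansion $\varphi(u)=(u-1)^{2}/2+\mathcal O((u-1)^{3})$.

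Decomposing $g(\zeta)=1+\zeta h(\zeta)$ with $h$ holomorphic near the origin splits the integral into a main term and a remainder. The ``$1$''-piece reproduces the leading contribution
\begin{align*}
\sqrt{\frac{w}{2\pi}}\int_{\eta}^{\infty}e^{-w\zeta^{2}/2}\,d\zeta=\tfrac{1}{2}\erfc\bigl(\eta\sqrt{w/2}\bigr),
\end{align*}
while for the remainder one integration by parts gives
\begin{align*}
\sqrt{\frac{w}{2\pi}}\int_{\eta}^{\infty}e^{-w\zeta^{2}/2}\zeta h(\zeta)\,d\zeta=\sqrt{\frac{w}{2\pi}}\biggl(\frac{h(\eta)}{w}e^{-w\eta^{2}/2}+\frac{1}{w}\int_{\eta}^{\infty}e^{-w\zeta^{2}/2}h'(\zeta)\,d\zeta\biggr),
\end{align*}
and both contributions are of the claimed order $\mathcal O(e^{-w\eta^{2}/2}/\sqrt{w})$.

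The hard part is to establish these estimates \emph{uniformly} on the wide sector $|\arg z|\leq 3\pi/2-\delta$ appearing in the lemma. This requires showing that $\zeta(u)$ admits a single-valued holomorphic continuation on the relevant cut domain (the branch of the square root must be tracked carefully as $\arg z$ approaches $\pm 3\pi/2$), deforming the $\zeta$-contour into the region $\operatorname{Re}(\zeta^{2})\geq\operatorname{Re}(\eta^{2})$ where $e^{-w\zeta^{2}/2}$ decays, and bounding $g$ by polynomial factors along this contour so that the integration by parts remains effective. Carrying out this uniform analysis in the complex sector is the genuine content of Temme's original work, from which the statement is cited.
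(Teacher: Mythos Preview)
Your proposal is correct and in fact more detailed than the paper's own treatment. The paper simply records that the statement is a special case of Temme's uniform expansion, quoting the form $Q(w,wz)=\tfrac12\erfc(\eta\sqrt{w/2})+R_w(\eta)$ with $R_w(\eta)\sim(2\pi w)^{-1/2}e^{-w\eta^2/2}\sum_{k\ge0}c_k(\eta)w^{-k}$ uniformly on the stated sector, and cites \cite{Temme} (and \cite{DPC}) for the proof. You instead sketch the mechanism behind Temme's result: the representation $Q(w,wz)=\sqrt{w/2\pi}\,(1+\mathcal O(1/w))\int_z^\infty e^{-w\varphi(u)}\,du/u$, the quadratic change of variable $\zeta^2/2=\varphi(u)$, the split $g(\zeta)=1+\zeta h(\zeta)$ that isolates the $\erfc$ main term, and the integration by parts that produces the $\mathcal O(e^{-w\eta^2/2}/\sqrt{w})$ remainder. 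Both you and the paper ultimately defer the delicate part---the single-valuedness of $\zeta(u)$ and the uniform contour estimates on $\lvert\arg z\rvert\le 3\pi/2-\delta$---to Temme's original work, so the two are the same in spirit; your version just unpacks what the citation contains.
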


\begin{proof}
The lemma is a special case of an asymptotic expansion derived in \cite{Temme}, where it is shown that
\begin{align*}
 Q(w,wz)=\frac{1}{2}\erfc\lb\eta\sqrt{(w/2)}\rb+R_w(\eta)
\end{align*}
and the remainder $R_w(\eta)$ in 
\eqref{Temme1}  admits an asymptotic expansion in negative powers of $w$ as $w\to\infty$,
\begin{align*}
 R_w(\eta)\sim(2\pi w)^{-1/2}e^{-\frac12 w\eta^2}\sum_{k=0}^\infty c_k(\eta)w^{-k}.
\end{align*}
The expansion is uniform for $\eta$ in the domain  
$\lv\arg(z)\rv\leq 3\pi/2-\d$ with $\d$ being an arbitrarily small positive constant (see also \cite{DPC}).
\end{proof}

Before beginning with the proof of Proposition \ref{prop_kernel_linearized}, we state the relevant asymptotics of the complementary 
error function for later use.
From \cite[6.5.32]{AbramowitzStegun} we have 
\begin{align}
 \erfc(z) =\frac{e^{-z^2}}{\sqrt{\pi}z}\lb 1+\O\lb{z^{-2}}\rb \rb,\label{erfc_asymptotics1}
\end{align}
as $z\to\infty$ in $\lv\arg(z)\rv<3\pi/4$. Morover, we have by the relation $\erfc(-z)=2-\erfc(z)$
\begin{align}
 \erfc(-z) =2-\frac{e^{-z^2}}{\sqrt{\pi}z}\lb 1+\O\lb{z^{-2}}\rb \rb,\label{erfc_asymptotics2}\quad z\to\infty,
\end{align}
again with $\lv\arg(z)\rv<3\pi/4$.

\begin{proof}[Proof of Proposition \ref{prop_kernel_linearized}]
We follow the path outlined in \cite{FKS} for the simpler case $t=0, K=0$ and start with an alternative representation of the 
Hermite polynomials (cf. \cite[22.10.15]{AbramowitzStegun} for the  sign $(-2i)^k$, for the other sign use the parity 
$H_k(-z)=(-1)^kH_k(z)$),
\begin{align*}
 H_k(z)=\frac{(\pm 2i)^k}{\sqrt{\pi}}\exp\lr{{z^2}}\rr\int_\R r^k\exp\lr{-{r^2}\mp 2izr}\rr dr,
\end{align*}
which allows to rewrite the kernel as
\begin{align}
 K_{a(t)}(z_1,z_2)&=\frac{\sqrt{a(t)^2-b^2}}{\pi^2}\exp\lr{c_{a(t)}^2}(z_1^2+\overline{z_2}^2)-\frac {a(t)}2(\lv z_1\rv^2+\lv 
z_2\rv^2)+\frac b4(z_1^2+z_2^2+\overline{z_1}^2+\overline{z_2}^2)\rr\notag\\
&\times\sum_{j=0}^{N-1}\lb \frac {2b}{a(t)}\rb^j\frac1{j!}\int_\R\int_\R 
(rs)^j\exp\lr-(r^2+s^2)+2ic_{a(t)}(rz_1-s\overline{z_2})\rr drds.\label{kernel1}
\end{align}
It will be useful lateron to derive a general formula independent of the limit regime. To this end we decouple the integrals via the substitution
\begin{align*}
 \Phi(r,s):=(u,v):=\lb\frac{r+s}{c_{a(t)}},\frac{r-s}{c_{a(t)}}\rb, \lv\det D\Phi(r,s)\rv=\frac2{\lv c_{a(t)}\rv^2}
\end{align*}
and arrive after algebraic manipulations at
 \begin{align}
 &K_{a(t)}(z_1,z_2)=\frac{\sqrt{a(t)^2-b^2}\lv c_{a(t)}\rv^2}{2\pi^2} \exp\lr-\frac {a(t)}2(\lv z_1\rv^2+\lv 
 z_2\rv^2)+\frac {ib}2\lb\Im(z_2^2)-\Im(z_1^2)\rb+a(t)z_1\overline{z_2}\rr\notag\\
 &\times\int_{\R/c_{a(t)}}\exp\lr-\frac{(a(t)+b)(a(t)-b)^2}{4a(t)b} \lb u-\frac{i(z_1-\overline{z_2})}{1-b/a(t)}\rb^2\rr\notag\\
 &\times\int_{\R/c_{a(t)}}\exp\lr-\frac{(a(t)+b)^2(a(t)-b)}{4a(t)b} \lb v-\frac{i(z_1+\overline{z_2})}{1+b/a(t)}\rb^2\rr Q\lb N,\frac{c_{a(t)}^2b}{2a(t)}(u^2-v^2)\rb dvdu.\label{gamma1}
 \end{align}
Here we used the definition of $c_{a(t)}$ in Lemma \ref{lemma_weight} and the well-known fact that for a positive integer $w$ one has
\begin{align*}
 Q(w,z)=e^{-z}\sum_{j=0}^{w-1}\frac{z^j}{j!}.
\end{align*}

We will consider the two integrals iteratively and start with the $v$-integral. Setting 
\begin{align}
C_{z_1,z_2}:=\frac{i(z_1+\overline{z_2})}{1+b/a(t)}, \label{def_C_z_1}
\end{align}
it can be written as
\begin{align}
 &\int_{\R/c_{a(t)}+C_{z_1,z_2}}\exp\lr-\frac{(a(t)+b)^2(a(t)-b)}{4a(t)b} v^2\rr \notag\\
&\times Q\lb N,\frac{c_{a(t)}^2b}{2a(t)}(u^2-v^2)-i\frac{c_{a(t)}^2b(z_1+\overline{z_2})}{a(t)+b}v+\frac{c_{a(t)}^2b(z_1+\overline{z_2})^2}{2a(t)(1+b/a(t))^2}\rb dv.\label{v-integral}
\end{align}
Our next aim is to truncate the integral over the infinite line $\R/c_{a(t)}+C_{z_1,z_2}$ to one over a small compact set.
With 
\begin{align*}
 z_j=X+\O\lb\frac1N\rb,\quad j=1,2 
\end{align*}
and $X\in\R$,
 it is easy to check that, as $N \to\infty$, $\R/c_{a(t)}+C_{z_1,z_2}\to\R+iX$. Moreover, we have the asymptotics
 \begin{align}
 	&\frac{(a(t)+b)^2(a(t)-b)}{4a(t)b}=\frac{NC_{\bar{K}}}{2}+\O\lb\frac{1+\lv t\rv}{N}\rb,\label{decay1}\\
 	&\frac{c_{a(t)}^2b}{2a(t)}=\frac{NC_{\bar{K}}}{4}+\O(1),\label{decay2}\\
 	&\frac{c_{a(t)}^2b}{a(t)+b}=\frac{NC_{\bar{K}}}{4}+\O(1),\label{decay3}\\
 	&\frac{c_{a(t)}^2b}{2a(t)(1+b/a(t))^2}=\frac{NC_{\bar K}}{16}+\O(1),\label{decay4}
 \end{align}
 where we recall $C_{\bar{K}}=1+4\g \bar{K}$. All $\O$ terms might be complex-valued and are uniform in $t$. We also note in passing that for these asymptotics, \eqref{K_derivative} has been used.

  \eqref{decay1} is relevant for the decay of the Gaussian term of the integral, (\ref{decay2},\ref{decay3},\ref{decay4}) are needed to control the possible growth of the incomplete Gamma function $Q(N,\cdot)$.  In view of Lemma \ref{lemma_gamma}, its first order asymptotics are given by $\erfc(\sqrt{N/2}\eta)/2$. If $\sqrt{N/2}\lv\eta\rv$ is bounded in $N$, the analyticity of $\erfc$ yields boundedness of the $Q$-term. If $\sqrt{N/2}\lv\eta\rv\to\infty$, \eqref{erfc_asymptotics1} or \eqref{erfc_asymptotics2} are applicable and we get the term $\exp(-\eta^2N/2)=\exp(-N(z-1-\log z))$ with 
 \begin{align*}
 	z=\frac{C_{\bar{K}}}{4}(u^2+X^2-v^2-2ivX)+\O(\lv t\rv/N).
 \end{align*}
 From \eqref{decay1} we see for $u$ fixed that the decay of $\exp\lr-\frac{(a(t)+b)^2(a(t)-b)}{4a(t)b} v^2\rr$ in $v$ is  faster than the possible growth of the $Q$-term and thus we can up to an error of order $\O(1/N)$ truncate the integral over $\R/c_{a(t)}+C_{z_1,z_2}$ to $[-CN^{-1/2+\e},CN^{-1/2+\e}]+C_{z_1,z_2}$ for some $C>0$ and any $\e>0$.
 Since $\R/c_{a(t)}$ is close to the real line for $N$ large enough, we can further assume $\Re(u^2)>0$ or $\Re z>-c$ for some small $c>0$. For such $z$ we have $\Re(z-1-\log z)>0$ or equal $+\infty$ at $0$ and hence we get $\lv e^{-\eta^2N/2}\rv\leq1$. This shows that the constant $C$ in $\lv v\rv\leq CN^{-1/2+\e}$ can be chosen independent of $u$, i.e.~our estimates are uniform in $u$.
 
 Next we will see that
 \begin{align}
 	Q\lb N,Nz\rb=1_{\{u^2+X^2\leq 
 		4/C_{\bar{K}}\}}+\O\lb\frac{\log N}{\sqrt{N}}\rb\label{eqn_indicator}
 \end{align}
 with an $\O$ term that is uniform in $u$ and $v$. Since $\lv v^2\rv=\O(1/N)$, the uniformity in $v$ is trivial. If $u$ is such that $\Re (u^2)+X^2\geq 4/C_{\bar K}+\sqrt{(\log N)/N}$, then $Q(N,Nz)=\O(1/N)$ by \eqref{erfc_asymptotics1}. Similarly, if $\Re (u^2)+X^2\leq 4/C_{\bar K}-\sqrt{(\log N)/N}$, then $Q(N,Nz)=1+\O(1/N)$ by \eqref{erfc_asymptotics2}. The area of $u$'s on $\R/c_{a(t)}$ such that $\lv \Re (u^2)+X^2-4/C_{\bar K}\rv\leq\sqrt{(\log N)/N}$, has a volume of order $\O((\log N)/N)$. Since $\erfc(\eta\sqrt{N/2})$ is bounded on this area, we arrive at \eqref{eqn_indicator}.

The remaining $v$-integral is
\begin{align*}
	&\int_{[-CN^{-1/2+\e},CN^{-1/2+\e}]+C_{z_1,z_2}}\exp\lr-\frac{NC_{\bar K}}{2} v^2\rr dv\sim \sqrt{\frac{2\pi}{NC_{\bar{K}}}},
\end{align*}
which can be seen by using analyticity and standard arguments to shift the contour to the real line.

To finish the proof, we choose in \eqref{gamma1}
\begin{align*}
 z_j=X+\frac{x_j+iy_j}{C_{\bar{K}}N},\quad j=1,2
\end{align*}
with $x_j,y_j\in\R,j=1,2$
and note the asymptotics
\begin{align*}
	&\frac{(a(t)+b)(a(t)-b)^2}{4a(t)b}=\frac{\tilde{\a}^2}{8}+\O\lb\frac{1+\lv t\rv}{N}\rb,\\
	&\sqrt{a(t)^2-b^2}\lv c_{a(t)}\rv^2=\frac{N^{5/2}C_{\bar{K}}^{5/2}}{2\tilde{\a}}+\O(N^{3/2})
	,\\
	&1-\frac b{a(t)}=\frac{\tilde{\a}^2}{2C_{\bar{K}}N}+\O\lb\frac t{N^2}\rb,\\
	&a(t)\sim b=\frac{N^2C_{\bar K}^2}{\tilde{\a}^2}+\O(N).
\end{align*}
 The proposition now follows by the change of variables $u\mapsto u/2$.
\end{proof}

\begin{proof}[Proof of Proposition \ref{strong_linearized}]
For the limit of strong non-Hermiticity, we choose
\begin{align}
 z_j=X+iY+\frac {x_j}{\sqrt{N}}+i\frac {y_j}{\sqrt{N}},\quad j=1,2\label{z_strong}
\end{align}
with $X,Y,x_j,y_j\in \R$. We deal with the more complicated case $\t\not=0$ first.
Using \eqref{z_strong}, \eqref{gamma1} reads
\begin{align}
 &K_{a(t)}(z_1,z_2)=\frac{\sqrt{a(t)^2-b^2} 
\lv c_{a(t)}\rv^2}{2\pi^2}\exp\Big[-\frac{a(t)(x_1^2+x_2^2)}{2N}-\frac{a(t)(y_1^2+y_2^2)}{2N}+\frac{i(a(t)-b)X(y_1-y_2) } { \sqrt{N}
} \notag\\
&-\frac{i(a(t)+b)Y(x_1-x_2)}{\sqrt{N}}
-ib\frac{(x_1y_1-x_2y_2)}{N}+\frac{a(t)(x_1x_2+y_1y_2-ix_1y_2+ix_2y_1)}{N}\Big]\notag\\
&\times\int_{\R/c_{a(t)}}
\exp\Big[-\frac{(a(t)-b)^2(a(t)+b)}{4a(t)b}\lb 
u+\frac{a(t)}{a(t)-b}\lb2Y-\frac{i(x_1-x_2)}{\sqrt{N}}+\frac{y_1+y_2}{\sqrt{N}}\rb\rb^2\Big]\notag\\
&\times\int_{\R/c_{a(t)}} 
\exp\Big[-\frac{(a(t)+b)^2(a(t)-b)}{4a(t)b}\lb v-\frac{a(t)}{a(t)+b}\lb 
2iX+\frac{i(x_1+x_2)}{\sqrt{N}}-\frac{y_1-y_2}{\sqrt{N}}\rb\rb^2\Big]\notag\\
&\times Q\lb N,\frac{c_{a(t)}^2b}{2a(t)}(u^2-v^2)\rb 
dvdu.\label{gamma2}
\end{align}
 We can now proceed analogously to the proof of Proposition \ref{prop_kernel_linearized}. For the $v$-integral we get 
\begin{align*}
&\int_{\R/c_{a(t)}}
\exp\Big[-\frac{(a(t)+b)^2(a(t)-b)}{4a(t)b}\lb v-\frac{a(t)}{a(t)+b}\lb 
2iX+\frac{i(x_1+x_2)}{\sqrt{N}}-\frac{y_1-y_2}{\sqrt{N}}\rb\rb^2\Big]\notag\\
&\times \frac{1}{\sqrt{2\pi}}\sqrt{\frac{(a(t)+b)^2(a(t)-b)}{2a(t)b}}Q\lb N,\frac{c_{a(t)}^2b}{2a(t)}(u^2-v^2)\rb 
dv\\
&= 1_{\big\{\frac{c_{a}^2b}{2aN}u^2+\frac{a(a-b)}{(a+b)N}X^
2\leq 1\big\}}+ \O(1/\sqrt{N}),
\end{align*}
where $a=a(0)$. Here, we used that the weaker error term in \eqref{eqn_indicator} can be improved (even to exponentially fast decaying terms) if $u$ is such that $\frac{c_{a}^2b}{2aN}u^2+\frac{a(a-b)}{(a+b)N}X^
2\leq 1-\d$ for some $\d>0$ fixed. This is by the concentration of $u$ around $Y$ equivalent to the assumption $Z\in E^\circ$, where $E$ is the elliptic set from the statement of the proposition.
Recall from the proof of Proposition \ref{prop_kernel_linearized} that the error bounds can be chosen uniform in $u$. Thus the same procedure can be repeated for the $u$-integral, giving
\begin{align}
 &\int_{\R/c_{a(t)}}
\exp\Big[-\frac{(a(t)-b)^2(a(t)+b)}{4a(t)b}\lb 
u+\frac{a(t)}{a(t)-b}\lb2Y-\frac{i(x_1-x_2)}{\sqrt{N}}+\frac{y_1+y_2}{\sqrt{N}}\rb\rb^2\Big]\notag\\
&\times \frac{1}{\sqrt{2\pi}}\sqrt{\frac{(a(t)-b)^2(a(t)+b)}{2a(t)b}} 1_{\big\{\frac{c_{a}^2b}{2aN}u^2+\frac{a(a-b)}{(a+b)N}X^
2\leq 1\big\}}\lb1+\O(1/\sqrt{N}\rb du\notag\\
&= 1_{\big\{\frac{a(a-b)}{(a+b)N}X^
2+\frac{a(a+b)}{(a-b)N}Y^
2\leq 1\big\}}+\O(1/\sqrt{N}).\label{indicator_strong}
\end{align}
To obtain the final form of the proposition, note that the determinant is invariant under conjugations of the kernel, i.e.~for any kernel $K$
\begin{align*}
\det(K(z_j,z_l))=\det(\tilde{K}(z_j,z_l)),
\end{align*}
where $\tilde{K}(z_j,z_l):=K(z_j,z_l)f(z_j)/f(\overline{z_l})$ and $f$ is some function without zeros or singularities. Using this, we see that the exponential factors 
\begin{align*}
\exp\lr\frac{i(a(t)-b)X(y_1-y_2) } { \sqrt{N}}-\frac{i(a(t)+b)Y(x_1-x_2)}{\sqrt{N}}
-ib\frac{(x_1y_1-x_2y_2)}{N}\rr
\end{align*}
cancel when taking the determinant.

Let us now consider the case $\t=0$. Then $b=0$ and it can be easily checked using polar coordinates that the orthonormal polynomials to the weight function $W_{a(t)}$ of Lemma \ref{lemma_weight} are
\begin{align*}
p_j(z):=\sqrt{\frac{a(t)^{j+1}}{\pi j!}}z^j,
\end{align*}
$\sqrt{\cdot}$ denoting the principal branch. This gives with \eqref{def_kernel}
\begin{align*}
K_{a(t)}(z_1,z_2)=\frac{a(t)}{\pi}\exp\lr -\frac{a(t)}2(\lv z_1\rv^2+\lv z_2\rv^2-2z_1\bar{z}_2)\rr Q(N,a(t)z_1\bar{z}_2).
\end{align*}
Invoking the asymptotics \eqref{eqn_indicator}, it is straightforward to finish the proof of the proposition.
\end{proof}

\section{Proof of main theorems for the trace-squared ensemble}\label{Sec-proof}
We will prove both main results simultaneously.
\begin{proof}[Proof of Theorem \ref{theorem_strong} and Theorem \ref{theorem_weak} for $\rho_{N,\Tr}^k$]\noindent
	
We will start with proving parts b) of both theorems. 
	Recall from \eqref{identity_corr_functions} 
	\begin{align}
		&\check{\rho}_{N,\tr}^{k}(\check{z})-\det(K_{\weak,\strong}(z_j,z_l))_{j,l\leq k}\notag\\
		&=\frac1{\sqrt{4\pi\g}}\int_\R \frac{\E_{a,b}e^{it 
				(\Tr \tilde{J}\tilde{J}^*-N(K_p+\hat{K}))}}{\E_{a,b}e^{-\g 
				(\Tr \tilde{J}\tilde{J}^*-N(K_p+\hat{K}))^2}}\lb\check{\rho}_{a(t),b}^{k}(\check{z})-\det(K_{\weak,\strong}(z_j,z_l))_{j,l\leq k}\rb 
e^{-\frac{t^2}{4\g}}dt,\label{identity_new}
	\end{align}
	where $K_{\weak,\strong}$ means eather $K_{\weak}$ or $K_{\strong}$ and $\hat{K}$ stands for $K$ in the case of Theorem \ref{theorem_strong} and $\bar{K}$ in the case of Theorem \ref{theorem_weak}. Furthermore, recall that $\check{\rho}_{N,\tr}^{k}(\check{z})$ and $\check{\rho}_{a(t),b}^{k}(\check{z})$  denote rescaled correlation functions of rescaled variables.
We have by Proposition \ref{prop_kernel_linearized} with $C:=C_{\bar{K}}$ and $\tilde{\a}:=C\a/\nu(X)$, or by Proposition 
\ref{strong_linearized} convergence of the term in the parenthesis to 0 with the error prescribed 
in the propositions, uniform for $\lv t\rv\leq 2\sqrt{\g\log N}$. Note here that the phase factor of the limiting kernel of 
Proposition \ref{prop_kernel_linearized} cancels when taking determinants. By \eqref{Laplace} and Lemma \ref{Lemma_K} we have that 
$\E_{a,b}e^{-\g 
	(\Tr \tilde{J}\tilde{J}^*-N(K_p+\hat{K}))^2}$ is bounded away from 0 uniformly in $N$. Clearly, $\lv\E_{a,b}e^{it 
	(\Tr \tilde{J}\tilde{J}^*-N(K_p+\hat{K}))}\rv$ is bounded above by 1. For $\lv t\rv>2\sqrt{\g\log N}$, we use 
\begin{align*}
	&\E_{a,b}\exp\lr{it 
		(\Tr \tilde{J}\tilde{J}^*-N(K_p+\hat{K}))}\rr P_{a(t),b}(J)=\exp\lr{-itN(K_p+\hat{K})}\rr P_{a,b}(J)\exp\lr{it 
		\Tr JJ^*}\rr
\end{align*}
and consequently
\begin{align}
	\lv \E_{a,b}e^{it 
		(\Tr \tilde{J}\tilde{J}^*-N(K_p+\hat{K}))}\check\rho_{a(t),b}^{k}\rv\leq\check\rho^k_{a,b}.\label{bound_corr_fct}
\end{align}
The uniformity of the convergence to the bounded limiting kernel in Proposition \ref{prop_kernel_linearized} or Proposition \ref{strong_linearized} thus proves the boundedness of the integrand 
of \eqref{identity_new} in $t$ and $N$. Hence we can split up the $t$-integral into $\lv t\rv\leq 2\sqrt{\g\log N}$ and $\lv t\rv> 
2\sqrt{\g\log N}$. The first integral gives the desired result whereas the second integral is $\O(1/N)$ by \eqref{erfc_asymptotics1}.

For the remaining part of the proof, we concentrate on the more complicated case $\t\not=0$. Part a) of Theorem \ref{theorem_strong} follows for $Z\in E^\circ$ immediately from b). For $Z\notin E$, the statement follows from \eqref{indicator_strong} together with \eqref{identity_corr1} and \eqref{bound_corr_fct}.

To prove part a) of Theorem \ref{theorem_weak}, note that \eqref{density_weak_2} follows (formally) directly from b): Choosing $\a=\sqrt{2\kappa}\nu(X)$, $k=1$ and $z_1=z_2=iy$, we obtain
\begin{align*}
&\int\frac{1}{N}\rho_{N,\tr}^1(X+iY)dY=\int\frac{1}{N^2\nu(X)}\rho_{N,\tr}^1\lb X+\frac{iy}{N\nu(X)}\rb dy\\
&\to \nu(X) \frac{1}{2\pi}\int_{-\pi}^{\pi}\exp\lr-\frac{\a^2u^2}{2}\rr\int_\R\frac{\sqrt{2}}{\sqrt{\pi}\a}\exp\lr-\frac{
			2y^2}{ \a^2 }
		-2uy\rr dy
		du=\nu(X), \ N\to\infty.
\end{align*}
 To make this argument rigorous, we need to show interchangeability of limit and integration. In view of \eqref{identity_corr1}, Lemma \ref{Lemma_K} and \eqref{bound_corr_fct}, it suffices to show uniform integrability of $y\mapsto N^{-2}\rho_{a,b}^1(X+iy/N)$.
From \eqref{gamma1}, we get with $z:=X+iy/N$
\begin{align*}
	 K_{a}(z,z)&=\frac{\sqrt{a^2-b^2} \lv c_{a}\rv^2}{2\pi^2}\exp\lr-\frac{a(a+b)y^2}{bN^2}\rr \int_{\R}\exp\lr-\frac{(a+b)(a-b)^2}{4ab} u^2-\frac{2c_a^2uy}{N}\rr\notag\\
	 &\times\int_{\R}\exp\lr-\frac{(a+b)^2(a-b)}{4ab} v^2\rr Q\lb N,\frac{c_{a}^2b}{2a}(u^2-v^2)+\frac{a(a-b)}{a+b}X^2-i(a-b)vX\rb dvdu.
\end{align*}
As in the proof of Proposition \ref{prop_kernel_linearized}, $v$ can be assumed to be small. However, in contrast to that proof, equation \eqref{eqn_indicator} alone is not enough to see that $K_a(z,z)$ decays in $y$. We will use \eqref{eqn_indicator} as a bound for the incomplete gamma function for $\lv u\rv\leq M$, where $M>0$ is chosen such that 
\begin{align}
	\frac{c_{a}^2b}{2aN}M^2+\frac{a(a-b)}{(a+b)N}X^2>1+\e\label{def_M}
\end{align}
for some $\e>0$ and all $N$. For $\lv u\rv\leq M$ we have by \eqref{eqn_indicator} 
\begin{align*}
\lv Q\lb N,\frac{c_{a}^2b}{2a}(u^2-v^2)+\frac{a(a-b)}{a+b}X^2-i(a-b)vX\rb\rv=\O(1),
\end{align*}
where the $\O$ term is uniform in $u,v$ and $N$. For $\lv u\rv>M$, we use Lemma \ref{lemma_gamma} and \eqref{def_M} to get the bound
\begin{align*}
	\lv Q\lb N,\frac{c_{a}^2b}{2a}(u^2-v^2)+\frac{a(a-b)}{a+b}X^2-i(a-b)vX\rb\rv=\O(\exp\lr -Cu^2\rr)
\end{align*}
for some $C>0$ and where the $\O$-term is again uniform in $u,v$ and $N$. 
In total, this gives the bound
\begin{align*}
	 K_{a}(z,z)=\O\lb \exp\lr-\frac{a(a+b)y^2}{bN^2}\rr\int_{\R}\exp\lr-C'u^2-\frac{2c_a^2uy}{N}\rr du\rb,
\end{align*}
where $C'>\frac{(a+b)(a-b)^2}{4ab}$. As we have
\begin{align*}
	\int_{\R}\exp\lr-\frac{(a+b)(a-b)^2}{4ab}u^2-\frac{2c_a^2uy}{N}\rr du=\O\lb\exp\lr\frac{a(a+b)y^2}{bN^2}\rr\rb
\end{align*}
and $C'>\frac{(a+b)(a-b)^2}{4ab}$, we arrive at
\begin{align}
	K_{a}(z,z)=\O\lb\exp\lr -C''y^2\rr\rb\label{final_bound_K_a}
\end{align}
for some $C''>0$. Here we also used that $\frac{(a+b)(a-b)^2}{4ab}=\O(1)$ and $\frac{a(a+b)}{bN^2}=\O(1)$.
The bound \eqref{final_bound_K_a} clearly shows that $K_a$ is integrable in $y$ which completes the proof of \eqref{density_weak_2}.

Finally, to prove \eqref{density_weak_1}, it suffices, analogously to above, to consider $K_{a}(Z,Z)$, $Z=X+iY$, $Y\not=0$. From \eqref{gamma2}, we get
\begin{align*}
K_a(Z,Z)&=\frac{\sqrt{a^2-b^2} 
c_{a}^2}{2\pi^2}
\int_{\R}
\exp\Big[-\frac{(a-b)^2(a+b)}{4ab}\lb 
u+\frac{2a}{a-b} Y\rb^2\Big]\notag\\
&\times\int_{\R} 
\exp\Big[-\frac{(a+b)^2(a-b)}{4ab}\lb v-\frac{2ia}{a+b}X\rb^2\Big]\notag Q\lb N,\frac{c_{a(t)}^2b}{2a(t)}(u^2-v^2)\rb 
dvdu.
\end{align*}
In contrast to the strongly non-Hermitian situation, here the term $2a/(a-b)$ in front of $Y$ is of order $N$ which leads after a shift to
an expression $Q(N,C_N)$ with $C_N>0$ of order $N^2$. Now, \eqref{Temme1} and \eqref{erfc_asymptotics1} give the result.

\end{proof}

\section{Proof of main theorems for the elliptic fixed trace ensemble}\label{Sec_proof_FT}
The first aim is an explicit expression of the correlation functions of $P_{N,\FT}$ similar to \eqref{identity_corr1}. We start by adapting notation. Let $K_{\FT}=\lim_{\g\to\infty}\g K$ denote the limit whose existence has been shown in Lemma \ref{Lemma_K}. Define

\begin{align}\label{hat_a}
	\hat{a}(t):=N\lb\frac {1}{1-\t^2}+2K_{\text{\tiny FT}}\rb-it,\  \hat{a}:=\hat{a}(0)\ \text{ and recall }\ b=\frac{\t N}{1-\t^2}.
\end{align}
\begin{lemma}\label{Lemma_FT}
We have for any $\t\in(-1,1)$, $K_p>0$, $N\geq2$, $1\leq k\leq N-1$ and $z\in\C^k$
\begin{align}\label{identity_FT}
	\rho_{N,\FT}^k(z)=\frac1{C_{N,\FT}}\int_\R \E_{\hat{a},b}\exp\lr{it 
			(\Tr \tilde{J}\tilde{J}^*-NK_p)}\rr\rho_{\hat{a}(t),b}^{k}(z)dt,
\end{align}
where $C_{N,\FT}=C_{N,\FT}(k,\t,K_p)$ is a positive constant with the property
\begin{align}
	0<C_1\leq C_{N,\FT}\leq C_2<\infty \label{C_FT}
\end{align}
for all $N\geq2$ and some constants $C_1,C_2$ not depending on $N$ or $\t\in(-1,1)$. 
\end{lemma}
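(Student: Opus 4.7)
The plan is a direct two-stage derivation: first rewrite the fixed-trace eigenvalue density via Schur decomposition, then Fourier-represent the sphere constraint so as to expose the complex-valued ensembles $P_{\hat a(t),b}$. Writing $J = U(Z+T)U^*$ with $Z=\mathrm{diag}(z_1,\dots,z_N)$ and $T$ strictly upper triangular, the constraint $\Tr JJ^* = NK_p$ becomes $\sum_j |z_j|^2 + \sum_{i<j}|T_{ij}|^2 = NK_p$. Integrating out the Haar $U$-factor and the $T$-entries over the Euclidean sphere of radius $(NK_p - \sum_j|z_j|^2)^{1/2}$ in $\R^{N(N-1)}$ yields the joint eigenvalue density
\begin{align*}
\rho_{N,\FT}^N(z) = c_N \prod_{j<l}|z_j - z_l|^2 \exp\!\Big[\tfrac{b}{2}\sum_j(z_j^2 + \bar z_j^2)\Big]\bigl(NK_p - \sum_j|z_j|^2\bigr)_{+}^{M}
\end{align*}
with $M = N(N-1)/2 - 1$ and an explicit normalization $c_N$.

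Next I would apply the Bromwich/Hankel representation, valid for any $\hat a > 0$ and $M > -1$,
\begin{align*}
x_{+}^{M} = \frac{\Gamma(M+1)}{2\pi}\int_\R e^{(\hat a - it)x}(\hat a - it)^{-(M+1)}\,dt,
\end{align*}
with $x = NK_p - \sum_j|z_j|^2$ and $\hat a$ as in \eqref{hat_a}. The $z$-integrand, up to the Vandermonde, becomes exactly the density \eqref{EV_density_epsilon} of $P_{\hat a(t),b}$. Interchanging integrations and integrating out $z_{k+1},\dots,z_N$ gives
\begin{align*}
\rho_{N,\FT}^k(z) = \mathrm{const}\cdot \int_\R \frac{Z^{\mathrm{EV}}_{\hat a(t),b}}{(\hat a - it)^{M+1}}\,e^{\hat a(t)NK_p}\,\rho_{\hat a(t),b}^k(z)\,dt.
\end{align*}
The key cancellation is that $M + 1 = N(N-1)/2$ equals the complex dimension of $T$; redoing the Schur decomposition at the matrix level for $P_{\hat a(t),b}$ shows $Z^{\mathrm{EV}}_{\hat a(t),b} = C\cdot\hat a(t)^{N(N-1)/2}\cdot Z_{\hat a(t),b}$ with $C$ independent of $t$, so the ratio $Z^{\mathrm{EV}}_{\hat a(t),b}/(\hat a - it)^{M+1}$ equals $C\cdot Z_{\hat a(t),b}$. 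Using $Z_{\hat a(t),b}/Z_{\hat a,b} = \E_{\hat a,b}[e^{it\Tr\tilde J\tilde J^*}]$ and $e^{\hat a(t)NK_p} = e^{\hat a NK_p}e^{-itNK_p}$ produces \eqref{identity_FT}, with $1/C_{N,\FT}$ absorbing the remaining $t$-independent constants.

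For the uniform bounds on $C_{N,\FT}$ I would specialize the identity to $k=1$ and integrate over $z\in\C$. By the orthogonality in Lemma \ref{lemma_weight}, $\int_\C \rho_{\hat a(t),b}^1(z)\,dz = N$ for every $t\in\R$, so
\begin{align*}
C_{N,\FT} = \int_\R \E_{\hat a,b}\bigl[e^{it(\Tr\tilde J\tilde J^* - NK_p)}\bigr]\,dt = 2\pi f_N(0),
\end{align*}
where $f_N$ is the density at $0$ of $\Tr\tilde J\tilde J^* - NK_p$ under $P_{\hat a,b}$. By the covariance computation in the proof of Lemma \ref{Lemma_K}, specialized to $\g\to\infty$, $\g K\to K_{\mathrm{FT}}$, this random variable is an affine combination of four independent $\chi^2$ variables with explicit coefficients; the choice of $K_{\mathrm{FT}}$ in Lemma \ref{Lemma_K}(d) is precisely the one that makes the mean $O(1)$ uniformly in $\t\in(-1,1)$, and the variance stays $O(1)$ as well. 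The characteristic function is then an explicit product $\prod_j (1 - 2ic_j t)^{-n_j/2}$ that is absolutely integrable for $N\ge 2$, so $f_N(0)$ is a convergent Fourier integral and standard CLT/Edgeworth estimates give uniform upper and lower bounds. The main obstacle is precisely this uniform-in-$\t$ lower bound at the degenerate endpoints $\t\to\pm 1$, where one of the variances $\bar\lambda_\pm^2$ from \eqref{chi} collapses and the corresponding $\chi^2$ factor becomes trivial; the argument must then rely on the remaining factors still producing a nondegenerate density, which is enabled by the strict inequality $\bar K_{\mathrm{FT}} > -1/4$ from Lemma \ref{Lemma_K}(d). Tracking the multiplicative constants (Schur Jacobians, Gaussian $T$-integrals, Gamma functions) through the first two steps is routine but bookkeeping-heavy.
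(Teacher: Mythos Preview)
Your derivation of \eqref{identity_FT} is correct but follows a genuinely different route from the paper. The paper obtains the formula by passing to the limit $\gamma\to\infty$ in the trace-squared identity \eqref{identity_corr1}: it shows via a Laplace argument that $\sqrt{\gamma}\,\E_{a,b}e^{-\gamma(\Tr\tilde J\tilde J^*-N(K_p+K))^2}\to\sqrt{\pi}\tilde f_N(0)$, checks that $\rho_{a(t),b}^k\to\rho_{\hat a(t),b}^k$ and $\E_{a,b}e^{itY_N}\to\E_{\hat a,b}e^{it(\Tr\tilde J\tilde J^*-NK_p)}$ by continuity in $\gamma K$, and then justifies dominated convergence in $t$ using the $|t|^{-N^2}$ decay of the characteristic function. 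Your approach bypasses the trace-squared ensemble entirely: you write the fixed-trace eigenvalue density explicitly via Schur, Fourier-represent the factor $(NK_p-\sum|z_j|^2)_+^M$, and then observe the pleasant cancellation $Z^{\mathrm{EV}}_{\hat a(t),b}/\hat a(t)^{M+1}=C\,Z_{\hat a(t),b}$ coming from the Gaussian $T$-integral. This is more direct and makes the structure of the formula transparent; the paper's approach, on the other hand, reuses the machinery already in place for $P_{N,\tr}$ and avoids tracking Schur Jacobians. Your identification $C_{N,\FT}=2\pi f_N(0)$ via integrating $\rho^1$ is also cleaner than the paper's Laplace computation (and in fact agrees with what the paper's argument actually produces once the prefactor $1/\sqrt{4\pi\gamma}$ is combined with the Laplace limit).

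Two technical points deserve a sentence each. First, your Hankel representation and the subsequent Fubini interchanges are absolutely convergent only for $M>0$, i.e.\ $N\ge 3$; at $N=2$ one has $M=0$ and the integral is only conditionally convergent, so that boundary case needs a separate (routine) treatment. Second, for the uniform bounds \eqref{C_FT} both you and the paper rely on essentially the same mechanism---a local CLT for $\Tr\tilde J\tilde J^*-NK_p$ under $P_{\hat a,b}$, with the paper appealing to the uniform gamma-function asymptotics of Lemma~\ref{lemma_gamma} and you to ``standard CLT/Edgeworth''---and both are somewhat terse about the degeneration at $\tau\to\pm 1$; your identification of the rescue via $\bar K_{\mathrm{FT}}>-1/4$ is the right observation.
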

\begin{remark}
	$C_{N,\FT}$ is determined by the normalization condition
	\begin{align*}
		C_{N,\FT}=\frac{(N-k)!}{N!}\int_{\C^k}\int_\R \E_{\hat{a},b}\exp\lr{it 
			(\Tr \tilde{J}\tilde{J}^*-NK_p)}\rr\rho_{\hat{a}(t),b}^{k}(z)dtdz.
	\end{align*}
	A more constructive expression of $C_{N,\FT}$ will be given in the proof below (see \eqref{C_FT_constructive}).
\end{remark}

\begin{proof}[Proof of Lemma \ref{Lemma_FT}]
	By construction, we have for each $k$ and $N$
	\begin{align*}
		\lim_{\g\to\infty}\rho_{N,\tr}^k(dz)=\rho_{N,\FT}^k(dz)
	\end{align*}
	in the weak sense. We will show in this proof that $\rho_{N,\tr}^k(z)$ converges, as $\g\to\infty$, pointwise in $z$ to a function given by the r.h.s.~of \eqref{identity_FT}. Using \eqref{characterization_corr_functions} and the dominated convergence theorem, \eqref{identity_FT} will follow.
	
Let us consider the r.h.s.~of \eqref{identity_corr1}, starting with 
	\begin{align}
		\sqrt{\g}\,\E_{a,b}\exp\lr{-\g 
			(\Tr \tilde{J}\tilde{J}^*-N(K_p+K))^2}\rr=\sqrt{\g}\int_0^\infty e^{-\g y}f_{N,\g}(y)dy,\label{Laplace2}
	\end{align}
	where $f_{N,\g}$ denotes the density of the distribution of $Y_N^2$ under $P_{a,b}$ and \\$Y_N:=\Tr 
\tilde{J}\tilde{J}^*-N(K_p+K)$. Defining $F_{N,\g}$ as the distribution function corresponding to $f_{N,\g}$, we find for $y\geq0$
	\begin{align*}
		&F_{N,\g}(y)=P_{a,b}(-\sqrt{y}\leq Y_N\leq \sqrt{y})=\tilde{F}_{N,\g}(\sqrt{y})-\tilde{F}_{N,\g}(-\sqrt{y}),
	\end{align*}
	$\tilde{F}_{N,\g}$ denoting the distribution function of $Y_N$. With the corresponding density $\tilde{f}_{N,\g}$ we see
	\begin{align}
		f_{N,\g}(y)=\frac{1}{\sqrt{y}}\cdot\frac12\lb \tilde{f}_{N,\g}(\sqrt{y})+\tilde{f}_{N,\g}(-\sqrt{y})\rb.\label{squareroot}
	\end{align}
	To compute the large $\g$ asymptotics of \eqref{Laplace2}, we need to determine the behavior of $f_{N,\g}$ and thus of $\tilde{f}_{N,\g}$ for $\g\to\infty$. To establish \eqref{identity_FT} for all $N$, we will first treat all terms $N$-independently. To study the large $N$ behavior in \eqref{C_FT}, we will then invoke asymptotic arguments (in $N$).

	Recall from \eqref{sum_rv} that $Y_N$ can be written as a positive linear combination of four independent, rescaled chi-squared distributed variables. The distribution function of a chi-squared distributed random variable with $n$ degrees of freedom is given by the normalized lower incomplete gamma function 
	\begin{align*}
		P\lb\frac n2,\frac y2\rb:=1-Q\lb\frac n2,\frac y2\rb.
	\end{align*}
	For example, for the first summand in \eqref{sum_rv},
	\begin{align}
		P_{a,b}\lb \bar{\l}_+^2\sqrt{2}\frac{Z_1-N(N-1)}{\sqrt{2N^2}}\leq \sqrt{y}\rb=P\lb \frac{N(N-1)}2,\frac{N(N-1)}2+\frac{N\sqrt{y}}{2\bar{\l}_+^2}\rb.\label{P_Gamma}
	\end{align}
The other three summands in \eqref{sum_rv} can be treated analogously. By the independence of the summands in \eqref{sum_rv} we find that $\tilde{f}_{N,\g}$ is the convolution of first derivatives of normalized incomplete Gamma functions. By the positivity of the density of chi-squared distributions and the fact that, because of the centering in $Z_1$ (and $Z_2$), $y=0$ corresponds to an interior point of the support of the density of the centered random variable, it is clear that $\tilde{f}_{N,\g}(0)>0$. $\tilde{f}_{N,\g}$ depends on $\g$ via the quantities $\bar\l_+,\bar\l_-,\s_{D,\Re}^2$ and $\s_{D,\Im}^2$, which all have (non-zero) limits by Lemma \ref{Lemma_K}  as $\g\to\infty$. It follows that $\tilde{f}_N(0)>0$, where $\tilde{f}_N(y):=\lim_{\g\to\infty}\tilde{f}_{N,\g}(y)$. Because derivatives (w.r.t.~the second variable) of the gamma function $P(w,y)$ in \eqref{P_Gamma} decay for large $y$, $\tilde{f}_{N,\g}$ actually converges uniformly in $y\in\R$ and is hence uniformly bounded.

 For the asymptotics of \eqref{Laplace2}, only the value $\tilde{f}_{N}(0)$ is important. Due to uniformity, it is assumed on the whole interval $(-\g^{-\e},\g^{-\e})$, $\e>0$ small enough. 
 Larger values of $y$ are irrelevant here because of the exponential decay of $\exp(-\g y)$. Hence the substitution $y'=\sqrt{y}$ gives with \eqref{squareroot}
 \begin{align}\label{C_FT_constructive}
 	\lim_{\g\to\infty}\sqrt{\g}\int_0^\infty e^{-\g y}f_{N,\g}(y)dy= \sqrt{\pi}\tilde{f}_N(0)=:C_{N,\FT}.
 \end{align}
Let us consider the other terms in \eqref{identity_corr1}. Clearly, $\exp[-t^2/(4\g)]$ converges to 1 for $\g\to\infty$. Because of the continuous dependence on $\g K$ (and the continuous dependence of $K$ on $\g$ itself by Lemma \ref{Lemma_K}), we have $\rho_{a(t),b}^k\to\rho_{\hat{a}(t),b}^k$ pointwise. From \eqref{def_kernel}, it follows easily that (for $N$ fixed) $\rho_{a(t),b}^k$ and $\rho_{\hat{a}(t),b}^k$ are bounded in $\g$ and $z$. Furthermore, in $t$, $\lv\rho_{a(t),b}^k(z)\rv\sim \lv t\rv^{kN}$. It remains to investigate $\E_{a,b}\exp\lr{it(\Tr \tilde{J}\tilde{J}^*-N(K_p+K))}\rr$. By the arguments above, it is the product of Fourier transforms of four, partially centered, chi-squared distributions with in total $2N^2$ degrees of freedom. Thus $\E_{a,b}\exp\lr{it(\Tr \tilde{J}\tilde{J}^*-N(K_p+K))}\rr$ decays in $t$ like $\lv t\rv^{-N^2}$.  Recall that $k<N$ and $N\geq2$. We find that the decay of $\E_{a,b}\exp\lr{it(\Tr \tilde{J}\tilde{J}^*-N(K_p+K))}\rr$ offsets the increase in $t$ of the correlation functions.
By Lemma \ref{Lemma_K}, $K$ converges to 0 as $\g\to\infty$ and as before, $a\to\hat{a}$. Employing arguments analogous to those discussing the uniform convergence of $\tilde{f}_{N,\g}$ to $\tilde{f}_N$, we find that $\E_{a,b}\exp\lr{it(\Tr \tilde{J}\tilde{J}^*-N(K_p+K))}\rr$ converges uniformly in $t$ to  $\E_{\hat{a},b}\exp\lr{it(\Tr \tilde{J}\tilde{J}^*-NK_p)}\rr$ as $\g\to\infty$. Keeping the decay in mind, we can apply the dominated convergence theorem and interchange the limit $\g\to\infty$ with the $t$-integral, thereby proving \eqref{identity_FT}.

To establish \eqref{C_FT}, by \eqref{C_FT_constructive} we need to study the large $N$ asymptotics of $\tilde{f}_N(0)$. Recall that $\tilde{f}_N$ is the convolution of first derivatives of incomplete Gamma functions. Let us again exemplarily look at \eqref{P_Gamma} and invoke Lemma \ref{lemma_gamma} for its large $N$ behavior. In the notation of Lemma \ref{lemma_gamma}, we have 
	\begin{align*}
		\eta=\frac{\sqrt{y}}{{\hat{\l}_+^2}(N-1)}+\O\lb\frac1{N^{3/2}}\rb
	\end{align*}
	where $\hat{\l}_+:=\lim_{\g\to\infty}\bar{\l}_+$. Thus $\eta\sqrt{N(N-1)/4}$ is of order 1 in $N$.
Since Lemma \ref{lemma_gamma} shows the convergence of $P(w,wz)$ (as $w\to\infty$) to the error function $\textup{erf}:=1-\erfc$ to be uniform in $z$ in (sufficiently large) complex domains and $P(w,\cdot)$ is an analytic function (in the second variable), the convergence extends to all derivatives by Cauchy's integral formula. Hence the density of the first summand in \eqref{sum_rv} converges to a Gaussian density as $N\to\infty$, uniformly on $\R$. Similar convergence holds for the three other summands, where those corresponding to $Z_3$ and $Z_4$ play the role of delta functions in the limit. We conclude that $\tilde{f}_N$ converges uniformly in $y$ to a Gaussian density as well. The uniformity in $\t$ can be seen from \eqref{variance_diag}, \eqref{variance_off} and \eqref{cov_real}. This proves \eqref{C_FT} and thus the lemma.

\end{proof}

\begin{proof}[Proof of Theorem \ref{theorem_strong} and Theorem \ref{theorem_weak} for $\rho_{N,\FT}^k$]\noindent
	
	Parts (a) of the theorems follow from parts (b) as in the case of the trace-squared ensemble, see Section \ref{Sec-proof}. We will again deal with weak and strong non-Hermiticity simultaneously.

Starting from \eqref{identity_FT}, we study
\begin{align}
	&\check\rho_{N,\FT}^k(\check z)-\det(K_{\strong/\weak}(z_j,z_l))_{j,l\leq k}\notag\\
	&=\frac1{C_{N,\FT}}\int_\R \E_{\hat{a},b}\exp\lr{it 
		(\Tr \tilde{J}\tilde{J}^*-NK_p)}\rr\lb\check\rho_{\hat{a}(t),b}^{k}(\check 
z)-\det(K_{\strong/\weak}(z_j,z_l))_{j,l\leq k}\rb dt,\label{identity_FT2}
\end{align}
$\check\rho_{N,\FT}^k$ and $\check z$ indicating appropriate rescaling like for the trace-squared ensemble (see 
\eqref{identity_corr_functions} and \eqref{rescaling}). For the limit of weak non-Hermiticity, the constant $K_\FT$ in the 
definition of $\hat{a}$ in \eqref{hat_a} has to be replaced by $\bar{K}_{\textup{FT}}$, which has been defined in Lemma 
\ref{Lemma_K}.
By Lemma \ref{Lemma_FT}, the constant $C_{N,\FT}$ has no influence on the possible convergence of \eqref{identity_FT2}. The difference of correlation functions in the integral converges to 0 uniformly for $\lv t\rv=\O(\sqrt{\log N})$ by Propositions \ref{prop_kernel_linearized} and  \ref{strong_linearized}. This will not be the case for $t$ being large in comparison to $N$. In fact, $\lv\check\rho_{\hat{a}(t),b}^k(\check z)\rv$ will increase as $\lv t\rv^{kN}$ for $N$ fixed, as seen in the proof of Lemma \ref{Lemma_FT}. In that proof, we also saw that $\lv \E_{\hat{a},b}\exp\lr{it 
		(\Tr \tilde{J}\tilde{J}^*-NK_p)}\rr\rv$ has a stronger decay in $t$ for $N$ fixed. Here, it is our task to show a version of this statement which is uniform in $N$.

To this end, let us consider first
\begin{align*}
	\E_{\hat{a},b}\exp\lr{it(\Tr \tilde{J}\tilde{J}^*-NK_p)}\rr=\int_\R e^{ity}\tilde{f}_N(y)dy.
\end{align*}
Recall that $\Tr \tilde{J}\tilde{J}^*-NK_p$ is the sum of four (partially centered) indepedent chi-squared random variables. Exemplarily,
\begin{align}
	&\lv \E_{\hat{a},b}\exp\lr it\sqrt{2}\bar{\l}^2_+\lb \frac{Z_1-N(N-1)}{\sqrt{2N^2}}\rb\rr\rv=\lv \lb1-2i\frac{\bar{\l}^2_+t}{\sqrt{N^2}}\rb^{-\frac{N(N-1)}2}\exp\lr-i\bar{\l}^2_+t\frac{N(N-1)}{\sqrt{N^2}}\rr\rv\notag\\
	&=\lb 1+\frac{(\bar{\l}^2_+t)^2}{N^2/2}\rb^{-N(N-1)/4}=\exp\lb-(N(N-1)/4)\log\lb 1+\frac{(\bar{\l}^2_+t)^2}{N^2/2}\rb\rb.\label{decay5}
\end{align}


Let us now consider $\check\rho_{\hat{a}(t),b}^k(\check z)$. It is instructive to look first at the simpler case $\t=0$ as the more 
complicated case $\t\not=0$ will be partly reduced to this situation. $\rho_{\hat{a}(t),b}^k(z)$ is (up to constants) 
the sum of $k!$ summands, each being a $k$-fold product of terms of the form
\begin{align*}
K_{\hat{a}(t)}(z_j,z_l)=\frac{\hat{a}(t)}{\pi}\exp\lr -\frac{\hat{a}(t)}2(\lv z_j\rv^2+\lv z_l\rv^2-2z_j\bar{z}_l)\rr Q(N,\hat{a}(t)z_j\bar{z}_l), 
\end{align*}
thus it suffices to bound $K_{\hat{a}(t)}$. Here, $z_j,z_l=Z+\O(1/N)$.  From Lemma \ref{lemma_gamma}, \eqref{erfc_asymptotics1} and \eqref{erfc_asymptotics2}, we see that $\lv K_{\hat{a}(t)}(z_j,z_l)\rv$ gets large only if $N=\O(\lv t\rv)$ in which case it might grow as $\exp(\O(N\log\lv t/N\rv))$. Such a growth is suppressed by the decay in \eqref{decay5}. 

Now assume $\t\not=0$. We will show first that \eqref{v-integral} (with $\hat{a}(t)$ instead of $a(t)$) can effectively be replaced by 
\begin{align}
	\sqrt{\frac{4\pi \hat{a}(t)b}{(\hat{a}(t)+b)^2(\hat{a}(t)-b)}}Q\lb N,\frac{c_{\hat{a}(t)}^2b}{2\hat{a}(t)}u^2+\frac{c_{\hat{a}(t)}^2b(z_1+\overline{z_2})^2}{2\hat{a}(t)(1+b/\hat{a}(t))^2}\rb,\label{saddle_claim}
\end{align}
independently of the size of $t$. To establish this, we have to see that the possible growth of 
\begin{align}
	\exp\lr -\frac{c_{\hat{a}(t)}^2b}{2\hat{a}(t)}(u^2-v^2)+i\frac{c_{\hat{a}(t)}^2b(z_1+\overline{z_2})}{\hat{a}(t)+b}v-\frac{c_{\hat{a}(t)}^2b(z_1+\overline{z_2})^2}{2\hat{a}(t)(1+b/\hat{a}(t))^2}\rr,\label{FT_growth1}
\end{align}
as $v\to\infty$, is negligible compared to the decay of 
\begin{align}
\exp\lr -\frac{(\hat{a}(t)+b)^2(\hat{a}(t)-b)}{4\hat{a}(t)b}v^2\rr.\label{FT_decay}
\end{align}
The decisive term in \eqref{FT_growth1} is as before 
\begin{align}
\exp\lr \frac{c_{\hat{a}(t)}^2b}{2\hat{a}(t)}v^2\rr.\label{FT_growth2}
\end{align}
We saw in \eqref{decay1} and \eqref{decay2} that for the trace-squared ensemble, the exponent of \eqref{FT_decay} was larger than the one of \eqref{FT_growth2}. Since
\begin{align*}
\frac{(\hat{a}(t)+b)^2(\hat{a}(t)-b)}{4\hat{a}(t)b}=\frac{c_{\hat{a}(t)}^2b}{2\hat{a}(t)}\cdot\lb1+\frac{\hat{a}(t)}{b}\rb
\end{align*}
and $\lv1+\hat{a}(t)/b\rv>1$ for any $N,t$, this remains to be true for the fixed trace ensemble.
Recall that the contour of integration is $\R/c_{\hat{a}(t)}+C_{z_1,z_2}$, where (cf.~\eqref{def_C_z_1})
\begin{align}
C_{z_1,z_2}=i(z_1+\overline{z_2})\frac{\hat{a}(t)}{\hat{a}(t)+b}.\label{def_C}
\end{align} 
It is a straightforward, but cumbersome task to compute the large $N$, large $t$ asymptotics of the involved terms. For instance, 
in the situation of strong non-Hermiticity the asymptotics of $\hat{a}(t)/(\hat{a}(t)+b)$ read 
\begin{align*}
 \lim_{N,t\to\infty}\frac{\hat{a}(t)}{\hat{a}(t)+b}=\begin{cases}
                                 \frac{1+2K_{\textup{FT}}(1-\t^2)}{1+\t+2K_{\textup{FT}}(1-\t^2)},&\ \text{ if  }t=o(N),\\
				 \frac{1+2K_{\textup{FT}}(1-\t^2)-is}{1+\t+2K_{\textup{FT}}(1-\t^2)-is},&\ \text{ if  }t=sN,\\
				 1,&\ \text{ if }t\gg N.
				\end{cases}
\end{align*}
Here $K_{\textup{FT}}$ has been defined in Lemma \ref{Lemma_K}. In the situation of weak non-Hermiticity, we have
\begin{align*}
 \lim_{N,t\to\infty}\frac{\hat{a}(t)}{\hat{a}(t)+b}=\begin{cases}
				 \frac12,&\ \text{ if }t=o(N^2),\\
				 \frac{1-ir}{2-ir},&\ \text{ if }t=rN^2,\\
				 1,&\ \text{ if }t\gg N^2.	
                                \end{cases}
\end{align*}
 It follows by \eqref{def_C} that $C_{z_1,z_2}$ is always 
$\O(1)$. This allows to shift the contour such that it passes through the origin.

Note that since \eqref{gamma1} originates from \eqref{kernel1} and $C_{z_1,z_2}=\O(1)$, it is clear 
that \eqref{FT_decay} decays on $\R/c_{\hat{a}(t)}$.
To see that the integral for the fixed trace analog of \eqref{v-integral} concentrates on $v=0$, it is necessary to know that the exponent of \eqref{FT_decay} goes to infinity as $N,t\to\infty$. To this end, we note firstly
\begin{align*}
\frac{(\hat{a}(t)+b)^2(\hat{a}(t)-b)}{4\hat{a}(t)b}=c_{\hat{a}(t)}^2\frac{\hat{a}(t)}{2(\hat{a}(t)+b)}
\end{align*}
and secondly the asymptotics for $c_{a(t)}^2$ as $N,t\to\infty$. In the situation of strong non-Hermiticity
\begin{align*}
 c_{\hat{a}(t)}^2\sim\begin{cases}
                                 \frac{N}{2\t}(1+4K_{\textup{FT}}+4K_{\textup{FT}}^2(1-\t^2)),&\ \text{ if }t=o(N),\\
\frac{N}{2\t}(1+4K_{\textup{FT}}+4K_{\textup{FT}}^2(1-\t^2))-\frac{isN}\t(1+2K_{\textup{FT}}(1-\t^2))-\frac{s^2N(1-\t^2)}{2\t},&\ 
\text{ if }t=sN,\\
				 -\frac{t^2(1-\t^2)}{2\t N},&\ \text{ if }t\gg N,
				\end{cases}
\end{align*}
and in the situation of weak non-Hermiticity
\begin{align*}
c_{\hat{a}(t)}^2\sim\begin{cases}
				 \frac {NC_{\bar{K}_{\textup{FT}}}}2,&\ \text{ if  }t=o(N),\\
				 \frac {NC_{\bar{K}_{\textup{FT}}}}2-isN,&\ \text{ if  }t=sN,\\
				 -it,&\ \text{ if  }N\ll t\ll N^2,\\
					-irN^2-\frac{rN^2\tilde{\a}^2}{2C_{\bar{K}_{\textup{FT}}}^2},&\ \text{ if  }t=rN^2,\\
					-\frac{t^2\tilde{\a}^2}{2C_{\bar{K}_{\textup{FT}}}^2N^2},&\ \text{ if }t\gg N^2,
                                \end{cases}
\end{align*}
where $C_{\bar{K}_{\textup{FT}}}:=1+4\bar{K}_{\textup{FT}}$. 
From here it is straightforward to check \eqref{saddle_claim} analogously to the proofs of Propositions \ref{prop_kernel_linearized} 
and \ref{strong_linearized}. 

 For the $u$-integral in the fixed trace analog of \eqref{gamma1} we have to distinguish two cases. In the situation of weak 
non-Hermiticity and $t=\O(N)$, the integral can be treated as in the proof of Proposition \ref{prop_kernel_linearized} and the 
result of Proposition \ref{prop_kernel_linearized} holds with error $\O(1)$ instead of $\O(\log N/\sqrt N)$. In all other 
situations, the $u$-integral can be treated analogously to the $v$-integral yielding
\begin{align*}
&K_{\hat{a}(t)}(z_1,z_2)\sim\frac{\hat{a}(t)b}{\pi}\frac{\lv \hat{a}(t)+b\rv\lv \hat{a}(t)-b\rv}{\sqrt{(\hat{a}(t)+b)(\hat{a}(t)-b)}}\notag\\
 		&\times\exp\lr-\frac {\hat a(t)}2(\lv z_1\rv^2+\lv 
 z_2\rv^2)+\frac {ib}2\lb\Im(z_2^2)-\Im(z_1^2)\rb+\hat a(t)z_1\overline{z_2}\rr\\
&\times Q\lb N,\frac{\hat{a}(t)^2b}{\hat{a}(t)^2-b^2}(z_1^2+\overline{z_2}^2)+\frac{\hat{a}(t)(\hat{a}(t)^2+b^2)}{\hat{a}(t)^2-b^2}z_1\overline{z_2}\rb.
\end{align*}
From here, the same arguments as for the case $\t=0$ (see the paragraph following \eqref{decay5}) can be invoked. It follows that the function 
\begin{align}\label{unif_int}
t\mapsto \E_{\hat{a},b}\exp\lr{it 
		(\Tr \tilde{J}\tilde{J}^*-NK_p)}\rr\lb\check\rho_{\hat{a}(t),b}^{k}(\check 
z)-\det(K_{\strong/\weak}(z_j,z_l))_{j,l\leq k}\rb
\end{align}
	is uniformly integrable in $N$, hence limit and integration can be interchanged. To see the claimed rates of convergence, 
we use Propositions 
\ref{prop_kernel_linearized} and \ref{strong_linearized} for $\lv t\rv=\O(\sqrt{\log N})$. For $\lv t\rv=o(N)$, \eqref{decay5} shows that \eqref{unif_int} has sub-Gaussian decay and for even larger $\lv t\rv$ the polynomial decay is sufficient.
\end{proof}

\bibliographystyle{plain}
\bibliography{bibliography.bib}

\end{document}